\documentclass[sn-mathphys,Numbered]{sn-jnl}

\usepackage{graphicx}%
\usepackage{multirow}%
\usepackage{amsmath,amssymb,amsfonts}%
\usepackage{amsthm}%
\usepackage{mathrsfs}%
\usepackage[title]{appendix}%
\usepackage{xcolor}%
\usepackage{textcomp}%
\usepackage{manyfoot}%
\usepackage{booktabs}%
\usepackage{algorithm}%
\usepackage{algorithmicx}%
\usepackage{algpseudocode}%
\usepackage{listings}%
\usepackage{mathtools}
\usepackage{subcaption}
\usepackage{todonotes}

\newcommand{\R}{\mathbb{R}}

\renewcommand{\vec}[1]{\mathbf{#1}}

\newcommand{\half}{{\textstyle \frac{1}{2}}}

\newcommand{\IR}{\mathbb{R}}

\newcommand{\norm}[1]{\left\lVert #1\right\rVert}

\newcommand{\ou}{\overline{u}}
\newcommand{\0}{\mathbf{0}}
\newcommand{\bb}{\mathbf{b}}
\newcommand{\nn}{\mathbf{n}}
\newcommand{\uu}{\mathbf{u}}

\newcommand{\xx}{\mathbf{x}}
\newcommand{\yy}{\mathbf{y}}

\newif\ifshowrevisions
\showrevisionstrue
\DeclareRobustCommand{\rev}[1]{{\ifshowrevisions\color{black}\fi#1}}
\newenvironment{revblock}{\ifshowrevisions\color{black}\fi}{}

\theoremstyle{thmstyleone}%
\newtheorem{theorem}{Theorem}
\newtheorem{proposition}[theorem]{Proposition}%
\newtheorem{lemma}{Lemma}%

\theoremstyle{thmstyletwo}%
\newtheorem{remark}{Remark}%

\theoremstyle{thmstylethree}%

\newcommand{\junk}[1]{{}}

\begin{document}

\title[%
Finite Volume Convergence for a Heat Equation with a Measure-Valued Source
]{%
Finite Volume Convergence for a Heat Equation with a Measure-Valued Source
}


\author[1]{\fnm{Matthias K.} \sur{Gobbert}}\email{gobbert@umbc.edu}

\author[2]{\fnm{Martin} \sur{Kru\v{z}\'{\i}k}}\email{kruzik@utia.cas.cz}

\author[3]{\fnm{Andreas} \sur{Meister}}\email{meister@mathematik.uni-kassel.de}

\author[1]{\fnm{Michael} \sur{Muscedere}}\email{mmusce1@umbc.edu}



\affil[1]{\orgdiv{Department of Mathematics and Statistics}, \orgname{University of Maryland, Baltimore County}, \orgaddress{\street{1000 Hilltop Circle}, \postcode{MD 21250} \city{Baltimore}, \country{USA}}}

\affil[2]{\orgname{Czech Academy of Sciences, Institute of Information Theory and Automation}, \orgaddress{\street{Pod vod\'{a}renskou v\v{e}\v{z}\'{\i} 4}, \postcode{182 00} \city{Praha 8}, \country{Czechia}} \&
\orgname{Czech Technical University, Faculty of Civil Engineering} \orgaddress{\street{Th\'{a}kurova 7}, \postcode{166 29} \city{Praha 6}, \country{Czechia}}}

\affil[3]{\orgdiv{Institute of Mathematics}, \orgname{University of Kassel}, \orgaddress{\street{Untere K\"onigsstr.~86}, \postcode{34117} \city{Kassel}, \country{Germany}}}




\abstract{%
Past work with Tom Seidman analyzed the spatial convergence order
of the numerical finite element method applied to
time-dependent diffusion-reaction equations with a measure-valued source term.
For smooth source terms, the numerical method has second-order accuracy,
but the non-smooth source terms introduce
a limitation of the order that depends on the spatial dimension.
This work in honor of Tom extends the analysis to
the finite volume method \rev{in its classical two-point flux form} and shows that the regularity of the source term
reduces the convergence order to 1 in two and 0.5 in three spatial dimensions
which is reduced from the nominal second-order and depends on the space dimension.
Computational tests with a point source modeled by a Dirac delta distribution
exhibit the behavior predicted by the analytical results.
}

\keywords{%
Diffusion-reaction equation;
Dirac delta distribution;
Finite element method;
Finite volume method;
Spatial convergence order}



\maketitle
\begin{center}
    {\it This paper is dedicated to the memory of Tom Seidman.}
\end{center}


\section{Introduction}

For co-author MG, Thomas~I. ``Tom'' Seidman was a dear departmental colleague, and many memories
connect to him, from the job interview, many conversations and visits to each
other's offices which led to joint papers on various topics,
e.g., \cite{SoaneGobbertSeidmanNW2004,GobbertEtAlCOMSOL2009},
co-teaching a special topics course that leveraged complementary
skills on the subject,
to serving as the lead-organizer for his 70th birthday conference in 2007
for his 50~years of research \cite{GobbertHoffmanShenIEEE2007}.
Tom also served on thesis committees, where he became aware of the question
of convergence order of the finite element method in the face of
thousands of point sources modeling calcium injection sites in a heart cell
as Dirac delta distributions \cite{Hanhartthesis2002,Hanhartjcam2002}.
Tom was never afraid to tackle anything, and
in this spirit he also engaged with numerical topics without hesitation,
thus starting the work on this project,
based on the purely numerical evidence in
\cite{Hanhartjcam2002,GobbertSISC2007,TrottGobbertCOMSOL2011}.
Co-author MM in the same way was connected to Tom by many conversations,
colloquium talks, the conference, and a joint research project with MG and Tom.
The other co-authors MK and AM enjoyed the wide-ranging conversations with Tom
during their visits to the department at UMBC, where the same question was
discussed, leading with co-author MK to the 
joint analysis paper of the finite element method for this problem
\cite{Measure_NumerMath2011} and
with co-author AM to the implementation of the
finite volume method for this application problem \cite{CalciumFV_NMPDE2012}.
The occasion of this special issue in honor of Tom Seidman has now brought
the co-authors together for this paper that is
an extension of \cite{Measure_NumerMath2011}
from the finite element method (FEM) to 
the finite volume method (FVM).

\junk{
papers on a range of topics; MG collaboration in department
\cite{SoaneGobbertSeidmanNW2004,GobbertEtAlCOMSOL2009,Measure_NumerMath2011}
thesis committee \cite{Hanhartthesis2002},
co-taught a course, led organization of 70th birthday conference in 2007 for 50 years of research career \cite{GobbertHoffmanShenIEEE2007};
he was never afraid to tackle anything;
in this spirit he also engaged with numerical topics without hesitation;
thus developed the work on this project,
originally with MK upon his visit to UMBC when noticing numerical evidence
in \cite{Hanhartthesis2002,Hanhartjcam2002};
AM also visited and was inspired by conversations with Tom led to \cite{CalciumFV_NMPDE2012},
the complementary computational paper to \cite{Measure_NumerMath2011};
We are thus happy to report that on the occasion of this special issue in honor of Tom Seidman,
the authors got together for this paper.
It is a generalization of \cite{Measure_NumerMath2011} from FEM
to other numerical methods for the same problem, notably FVM.
}

Like in \cite{Measure_NumerMath2011} for finite elements,
we are interested in the convergence behavior of finite volume schemes
applied to diffusion-reaction equations of the form
\begin{align}
  u_t(\xx,t) - \nabla \cdot \left(D \, \nabla u(\xx,t)\right) &= \varphi(\xx, t)
    &&\textrm{in} \; {\cal{Q}} = \Omega \times \R_0^+, \label{eq:DREQ} \\
  u(\xx,t) &= 0
    &&\textrm{on} \; \partial\Omega \times \R_0^+ \label{DBC} \\
  u(\xx,0) &= 0
    &&\textrm{on} \; \overline{\Omega}, \label{eq:Uinit}
\end{align}
where  $\Omega\subset\R^d$ is a 
bounded Lipschitz domain in $d$ spatial dimensions,
$\varphi$ includes a smooth or measure-valued source term,
combined with  Dirichlet boundary conditions, and an initial condition.

In \cite[Theorem~4.2]{Measure_NumerMath2011}, the finite element solution $u_h$ on a mesh with spacing $h$
was shown to have a dimension-dependent convergence order in the $L^2$-norm with order
$\|u(t)-u_h(t)\|_{L^2(\Omega)} \leq C h^\lambda$ with
$\lambda \approx {2 - d/2}$ in $d = 2$ and $3$ dimensions,
that is, order $h$ in 2-D and $h^{1/2}$ in 3-D.
We are now able to extend the result to the finite volume method\rev{,
in the form of the classical two-point flux approximation described in Section~\ref{sec:num_method}}, with result
\begin{align}
\|u(t)-u_h(t)\|_{L^2(\Omega)} &\le 
\begin{cases}
C h |\log h|^{1/2}, & d=2,\\
C h^{1/2}, & d=3,
\end{cases}
\end{align}
in (\ref{equ:ResultHeatPersistentSemi}) in Section~\ref{sec:HeatErrAnalPersistent} below,
that shows essentially the same convergence behavior.
The question of extending the finite element results to the finite volume method
had come up among the co-authors ever since the numerical results
in \cite{CalciumFV_NMPDE2012}, of which Tom was well-aware.
Unfortunately, we could not find the time to extend the results together with Tom,
but it is very fitting that we have been able to come together on the occasion of this paper.

For the model heat equation \eqref{eq:DREQ} with $D = 1$ and $\varphi = \mu$,
where \(\mu\) is a bounded Radon measure, the problem is well understood at the
level of the continuous PDE. In particular, the theory of renormalized solutions
for nonlinear parabolic equations with general measure data includes the heat
equation as the special case \(p=2\) of
\[
    u_t - \Delta_p u = \mu. 
\]
A relevant reference is Petitta~\cite{Petitta2008}.

For finite volume methods, the situation appears to be less complete. There are
finite volume convergence results for parabolic convection--diffusion equations
with \(L^1\) right-hand sides, for instance
Gallou{\"e}t, Larcher and Latch{\'e}~\cite{GallouetLarcherLatche2012}.
There are also finite volume results for nonlinear parabolic problems with
\(L^1\) data in the framework of renormalized solutions, see
Guib{\'e} and Leclavier~\cite{GuibeLeclavier2019}. However, a direct finite
volume analogue of the elliptic result of Droniou, Gallou{\"e}t and
Herbin~\cite{DroniouGallouetHerbin2003}, with a genuinely Radon-measure
right-hand side, does not seem to be as standard for the heat equation.

Thus, one may summarize the state of the literature as follows. The heat
equation with measure-valued right-hand side is solved at the PDE level. For
finite volume schemes, the available convergence theory is well developed for
\(L^1\) parabolic data, while the fully Radon-measure finite volume theory is
clearly established in the elliptic case and seems less explicit in the standard parabolic finite volume literature. From the point of view of applications, 
heat equations with low-regularity/measure right-hand side appear in many models of the continuum mechanics of solids, as e.g. in modeling of shape memory alloys \cite{kruzik-roubicek}. 

\rev{Since the analysis concerns a specific finite volume scheme,
Section~\ref{sec:num_method} first describes this scheme,
namely the classical two-point flux approximation, which is the simplest
finite volume method for diffusion problems,
and introduces the notation used in the analysis.
Section~\ref{sec:num_method} also highlights key features of
the finite volume method and explains why it is particularly
suitable for problems with thousands of
measure-valued Dirac source terms that motivated this work originally
\cite{Hanhartthesis2002,Hanhartjcam2002}.}
The following analysis uses the same approach developed with Tom
in \cite{Measure_NumerMath2011} by first
considering the stationary problem corresponding to \eqref{eq:DREQ}
before returning to the time-dependent problem.
More precisely, \rev{Section~\ref{sec:PoisErrAnal}}
considers the Poisson equation with a Dirac source, before
Sections~\ref{sec:HeatErrAnalSpaceTime} and~\ref{sec:HeatErrAnalPersistent}
consider the heat equation with a space-time Dirac and a persistent Dirac source, respectively.
Each section analyzes the convergence in $d = 2$ and $d = 3$ space dimensions
and demonstrates the effect of dimensionality in the final results
\eqref{equ:ResultPois2D} and \eqref{equ:ResultPois3D} in Section~\ref{sec:PoisErrAnal},
\eqref{equ:ResultHeatSpaceTime2D} and \eqref{equ:ResultHeatSpaceTime3D}
in Section~\ref{sec:HeatErrAnalSpaceTime}, and 
\eqref{equ:ResultHeatPersistentSemi} and
\eqref{equ:ResultHeatPersistent} in Section~\ref{sec:HeatErrAnalPersistent}.
\rev{For the Poisson equation, the scaling arguments of
Subsections~\ref{subsec:Pois2D} and~\ref{subsec:Pois3D} are complemented in
Subsection~\ref{subsec:PoisRigorous} by a proof that also accounts for the global
propagation of local truncation errors by the discrete solution operator.}

\rev{Finally, Section~\ref{sec:num_results} provides} numerical demonstrations that
the behavior predicted by the analysis is exhibited in actual computations\rev{;
it} is comprised of two subsections
for the stationary and time-dependent problems, respectively.
Specifically, the convergence orders
in Table~\ref{tab:PoissonDiracDirichlet} in Subsection~\ref{subsec:stationary}
exhibit the behavior predicted in
\eqref{equ:ResultPois2D} and \eqref{equ:ResultPois3D} in Section~\ref{sec:PoisErrAnal}
and the convergence orders
in Table~\ref{tab:HeatDiracDirichlet} in Subsection~\ref{subsec:transient}
the behavior predicted in
\eqref{equ:ResultHeatPersistent} in Section~\ref{sec:HeatErrAnalPersistent}.
Both tables' computational results point to the hypothesis of
the convergence order being $2 - d/2 = 1$ and $1/2$
in $d = 2$ and $3$ space dimensions, respectively.

In summary, this paper shows the analytical proof for the
extension of the old result for the finite element method,
obtained with Tom Seidman in \cite{Measure_NumerMath2011},
to the finite volume method,
and potentially to other methods like the discontinuous Galerkin method.
The accompanying computational evidence confirms the analytical
behavior and is obtained by sample code
in Matlab for test problems in 2 and 3 spatial dimensions
that aligns with previous results from general purpose C code.



\section{\rev{Finite Volume Method Discretization}}\label{sec:num_method}

\subsection{General formulation}

The essence of the finite volume method, see e.g. \cite{barrenechea2025} lies in integrating the PDE \eqref{eq:DREQ} of the problem
over so-called control volumes $\sigma_k \subset \overline{\Omega}$ to derive an integral form
of the PDE.
To this end, the open domain $\Omega$ is decomposed into a
finite number of disjoint open control volumes $\sigma_1, \ldots, \sigma_m \subset \Omega$
such that $\cup_{k=1}^m \overline{\sigma_k} = \overline{\Omega}$.
In our context of a regular quadrilateral domain $\Omega = (-50,50)^d$
in $d = 1, 2, 3$ dimensions in Section~\ref{sec:num_results},
we use a regular mesh with spacing $h$ and $N$ control volumes in each dimension,
for a total of $m = N^d$ volumes.
The integration of the PDE \eqref{eq:DREQ} over each control volume yields then
\begin{equation}\label{eq:INT}
   \frac{d}{dt} \int_{\sigma_k} u(\xx,t) \, d\xx
     = \int_{\sigma_k} \nabla \cdot \left(D \, \nabla u(\xx,t)\right) \, d\xx
     + \int_{\sigma_k} \varphi(\xx,t) \, d\xx,
\end{equation}
where time differentiation and spatial integration were already interchanged.
Introducing the cell averages
\begin{equation}\label{eq:CA}
  u_k(t) = \frac{1}{|\sigma_k|} \int_{\sigma_k} u(\xx,t) \, d\xx
\end{equation}
makes the following integral form of \eqref{eq:INT}
\begin{equation}\label{eq:INT1}
	\frac{d}{dt} u_k(t)
    = \frac{1}{|\sigma_k|} \int_{\partial \sigma_k} \nn \cdot \left(D \, \nabla u(\xx,t)\right) \, ds
    + \frac{1}{|\sigma_k|} \int_{\sigma_k} \varphi(\xx,t) \, d\xx
\end{equation}
eventually a system of equations for the cell averages as unknowns,
where the integral over the diffusion term is converted to a surface integral
using the Gaussian integral theorem with outer unit normal vector $\nn$.

\begin{revblock}
It remains to approximate the normal fluxes in \eqref{eq:INT1}.
Since \eqref{eq:DREQ} contains no convective term, the only fluxes are the diffusive fluxes $\nn\cdot(D\nabla u)$,
which depend linearly on $u$ and are continuous across the faces of the control volumes.
Consequently, no reconstruction of left and right states at the faces and no (approximate) Riemann solver are needed,
in contrast to finite volume schemes for hyperbolic or convection-dominated problems.
We use the classical two-point flux approximation \cite{EymardGallouetHerbin2000},
which is the simplest finite volume method for diffusion problems:
each control volume $\sigma_k$ carries one unknown $u_k$, which approximates the cell average \eqref{eq:CA}
and is associated with a point $\xx_k\in\overline{\sigma_k}$,
and the flux through an interior face $e=\partial{\sigma_k}\cap\partial{\sigma_l}$ is approximated by the difference quotient
\begin{equation}\label{eq:TPFA}
\int_e \nn_{k,e}\cdot\left(D\,\nabla u\right) ds \approx F_{k,e}(\uu) := D\,|e|\,\frac{u_l-u_k}{d_e},
\end{equation}
where $\nn_{k,e}$ is the unit normal pointing out of $\sigma_k$, $d_e=|\xx_k-\xx_l|$,
and the segment from $\xx_k$ to $\xx_l$ is orthogonal to $e$ (admissibility of the mesh).
For a face $e\subset\partial\Omega$, $u_l$ is replaced by the boundary value $u=0$
and $d_e$ by the distance from $\xx_k$ to $e$.
The fluxes are conservative, $F_{l,e}=-F_{k,e}$.
Each unknown thus interacts only with its face neighbors, and no higher-order reconstruction is used.
This is what we mean by a \emph{low-order}, more precisely, lowest-order, finite volume method throughout this paper;
its accuracy is summarized in Subsection~\ref{subsec:FVnotation} below.
On the uniform Cartesian meshes of Section~\ref{sec:num_results}, \eqref{eq:TPFA} reduces in the interior of the domain
to central difference quotients, i.e., to the classical five-point ($d=2$) and seven-point ($d=3$) stencils,
which leads to the Kronecker product form of the matrices given below.
\end{revblock}

The introduction of the surface integral of the diffusion term
in the first term on the right-hand side of \eqref{eq:INT1} allows the
finite volume method to conserve the mass in the system at the discrete level,
which is one key advantage in application problems such as the
calcium flow in a heart cell \cite{Hanhartthesis2002,Hanhartjcam2002}
that motivated this work originally.
The other key advantage lies in the ability of the integral in the
second term on the right-hand side of \eqref{eq:INT1} to rigorously handle
a point source modeled by a Dirac delta $\varphi(\xx,t) = \kappa \, \delta_0(\xx)$
with injection rate $\kappa$ as
\begin{equation}
    \frac{1}{|\sigma_k|} \int_{\sigma_k} \varphi(\xx,t) \, d\xx
    = \frac{1}{|\sigma_k|} \int_{\sigma_k} \kappa \, \delta_0(\xx) \, d\xx
    = \frac{\kappa}{|\sigma_k|}
\end{equation}
for that control volume $\sigma_k$ that contains the location of the injection site
and $0$ otherwise.
An \rev{analogue} of this is possible for the finite element method as in \cite{Measure_NumerMath2011},
but it is not possible for instance for a finite difference method that cannot resolve
the Dirac delta.
\rev{In particular, the finite volume method uses only the integrals of the source over the control volumes.
It therefore does not distinguish between the Dirac source $\kappa\,\delta_0$ and the regularized source
$\kappa\,|\sigma_k|^{-1}\mathbf{1}_{\sigma_k}$ that is uniformly distributed over the control volume containing the injection site;
this observation is used in the analysis in Section~\ref{sec:HeatErrAnalPersistent}.}

For the vector of unknown cell averages as function of time
and with the vector of integrals of the source term
\[
\uu(t) = \begin{pmatrix}
	               u_1(t)\\
	               \vdots \\
	               u_{m}(t)
	            \end{pmatrix},
\quad
\mathbf{\Phi}(t)
\begin{pmatrix}
	\frac{1}{|\sigma_1|} \int_{\sigma_1} \varphi(\xx,t) \, d\xx\\
	\vdots \\
	\frac{1}{|\sigma_m|} \int_{\sigma_{m}} \varphi(\xx,t) \, d\xx	
\end{pmatrix},
\] 
the semi-discrete form of the finite volume method can then be formulated in vector notation as
\begin{equation}\label{eq:semi}
	\hat{M} \frac{d\uu(t)}{dt} 
	= A\,\uu(t) + \hat{M} \mathbf{\Phi}(t)
\end{equation}
with lumped mass matrix $\hat{M} \in \R^{m \times m}$
and stiffness matrix $A \in \R^{m \times m}$ with $m = N^d$ in $d = 1, 2, 3$ spatial dimensions
for $N$ the number of control volumes in each spatial dimension.
These matrices are block diagonal or \rev{tridiagonal} matrices with diagonal or \rev{tridiagonal} blocks, respectively,
that can be very effectively pre-computed as sums of Kronecker products $\otimes$ in software such as Matlab.
For instance,
the matrices are $\hat{M} = S \in \IR^{N \times N}$ and
$A = \frac{D}{h^2} \, T\in \R^{N \times N}$ in the one-dimensional case,
$\hat{M} = S \otimes S \in \R^{N^2 \times N^2}$ and
$A = \frac{D}{h^2}(S \otimes T + T \otimes S) \in \R^{N^2 \times N^2}$ in two dimensions, and
$\hat{M} = S \otimes S \otimes S \in \R^{N^3 \times N^3}$ and
$A = \frac{D}{h^2}(S \otimes S \otimes T + S \otimes T \otimes S + T \otimes S \otimes S) \in \R^{N^3 \times N^3}$
in three dimensions, constructed from
the diagonal matrix $S \in \IR^{N \times N}$ and
the tridiagonal matrix $T \in \IR^{N \times N}$.
Matlab's sparse storage scheme is the key to the efficiency of
operations such as matrix-vector products with these matrices.

The exact coefficients in the matrices $S$, $T$, $A$, and $\hat{M}$ depend on the
discretization of the diffusion term and the boundary conditions and are designed
to facilitate the Kronecker products for efficient implementation,
resulting in
$S \in \R^{N \times N}$ diagonal with $\half, 1,\ldots, 1, \half$ on the diagonal and
$T \in \R^{N \times N}$ tridiagonal with $1, 2, \ldots, 2, 1$ on the main diagonal
and $-1$ on the adjacent lower and upper bands.
In the current implementation, we use central differences in the interior of the domain and
one-sided approximations along the boundary\rev{; in the interior, this is exactly the two-point flux
approximation \eqref{eq:TPFA} on the uniform mesh}.
The original method uses flux boundary conditions yielding the specific matrices above.
The Dirichlet boundary conditions are implemented as correction of certain values in $A$,
namely all diagonal values, including the ones corresponding to control volumes at
the boundary, being the same value as the diagonal value for interior control volumes.

The time discretization of the semi-discrete problem \eqref{eq:semi} results necessarily
in a stiff system of ordinary differential equations, which necessitates the use of an appropriate ODE solver.
The implicit Euler time-stepping method is both appropriate for this situation, simple, and sufficiently accurate if using small time steps.
Solving the implicit system for the solution at every new time step by the
iterative conjugate gradient method
is highly accurate if a tight tolerance is used and still very efficient, since the solution at the old time step can be used as an excellent initial guess.

For a stationary test problem, the left-hand side of \eqref{eq:semi} is not present
and no variable depends on time,
thus we have one linear system for the unknowns to solve, which we solve by \rev{sparse} direct
Gaussian elimination\rev{, since it is robust and requires no tunable parameters such as stopping tolerances.
For the symmetric, diagonally dominant system matrix $A$, Gaussian elimination is backward stable,
so that the relative error of the computed solution is bounded by approximately $\kappa(A)\,\varepsilon_{\mathrm{mach}}$,
where $\kappa(A)$ denotes the condition number of $A$ and $\varepsilon_{\mathrm{mach}}\approx1.1\cdot10^{-16}$
the unit roundoff of double-precision arithmetic \cite{Higham2002}.
The computed solution is therefore not accurate to full double precision:
here, $\kappa(A)=O(h^{-2})$ grows under mesh refinement, which is also the origin of the stiffness
of the time-dependent problem noted above.
However, even on the finest meshes used in Section~\ref{sec:num_results}, the resulting algebraic error is
many orders of magnitude smaller than the discretization errors reported in Table~\ref{tab:PoissonDiracDirichlet}
and therefore does not affect the convergence study.}

\junk{%
Using the implicit Euler method with a time step size $\Delta t$ and $t^0 = 0$ as well as $t^{n+1} = t^n + \Delta t$, $n=0,1,\ldots,$ we obtain the fully discrete method in the form $M\uu^{n+1} = F(t^n,t^{n+1})$
\[
 \left(I - \frac{\Delta t}{h^2} A\right)\uu^{n+1} = \uu^n + \frac{\Delta t}{h^2} \left( \bb^{n+1} + \mathbf{\Phi}^{n+1}\right),
\]
where $\uu^n$ and $\uu^{n+1}$ represent approximations of the corresponding vector of cell averages $\uu(t^n)$ and $\uu(t^{n+1})$ respectively, while $\bb^{n+1} = \bb(t^{n+1})$ as well as $\mathbf{\Phi}^{n+1} = \mathbf{\Phi}(t^{n+1})$.
}

The size of the system in \eqref{eq:semi} is given by the number $m$ of control volumes.
This becomes a significant scale in higher dimensions as $m = N^d$ in $d = 1, 2, 3$ spatial dimensions
for $N$ the number of control volumes in each spatial dimension.
The method is implemented in Matlab, fully taking advantage of its efficient vectorization capabilities.

\begin{revblock}
\subsection{Notation and accuracy of the scheme used in the analysis}\label{subsec:FVnotation}

In the analysis of Sections~\ref{sec:PoisErrAnal}--\ref{sec:HeatErrAnalPersistent}, we set $D=1$,
denote the mesh by $\mathcal{T}_h=\{\sigma_1,\ldots,\sigma_m\}$, write $K,L\in\mathcal{T}_h$ for generic control volumes,
$\mathcal{E}$ for the set of faces, and $h$ for the mesh size.
The mesh is assumed to be quasi-uniform, i.e., $\operatorname{diam}(K)\sim h$ and $|K|\sim h^d$,
and $K_0$ denotes the control volume containing the source point $\xx_0$.
A vector of unknowns $v=(v_K)_{K\in\mathcal{T}_h}$ is identified with the piecewise constant function that equals $v_K$ on $K$,
and $P_h$ denotes the projection onto cell averages, $(P_hu)_K=|K|^{-1}\int_K u\,d\xx$.
We use the discrete inner product, norm, and energy seminorm
\begin{equation}\label{eq:discrete-norms}
(v,w)_h=\sum_{K\in\mathcal{T}_h}|K|\,v_Kw_K,\qquad
\|v\|_h=(v,v)_h^{1/2},\qquad
|v|_{1,h}^2=\sum_{e\in\mathcal{E}}\frac{|e|}{d_e}\,(D_ev)^2,
\end{equation}
where $D_ev=v_K-v_L$ for an interior face $e$ between $K$ and $L$ (with a fixed orientation),
$D_ev=v_K$ for a face $e\subset\partial K\cap\partial\Omega$, and $d_e$ is the corresponding distance in \eqref{eq:TPFA}.
Note that $\|v\|_h$ is the $L^2(\Omega)$ norm of the piecewise constant function $v$,
and that $|\cdot|_{1,h}$ is the discrete energy norm (discrete $H^1_0$ seminorm),
which is denoted by $\|\cdot\|_{\mathrm{energy}}$ in Section~\ref{sec:HeatErrAnalSpaceTime}.
The discrete Laplacian is defined by $(\Delta_hv)_K=|K|^{-1}\sum_{e\subset\partial K}F_{K,e}(v)$
with the fluxes \eqref{eq:TPFA} for $D=1$, so that
\[
(-\Delta_hv,w)_h=\sum_{e\in\mathcal{E}}\frac{|e|}{d_e}\,D_ev\,D_ew=:a_h(v,w),\qquad |v|_{1,h}^2=a_h(v,v).
\]
The finite volume approximations of $-\Delta u=f$ and of $u_t-\Delta u=f$ then read
\[
-\Delta_hu_h=f_h
\qquad\text{and}\qquad
u_h'-\Delta_hu_h=f_h,
\qquad\text{with } (f_h)_K=|K|^{-1}\int_Kf\,d\xx .
\]
In particular, $f=\delta_{\xx_0}$ leads to $f_h=\mu_h:=|K_0|^{-1}\mathbf{1}_{K_0}$,
which is also the source vector of the regularized source $\mu_h$; this is used in Section~\ref{sec:HeatErrAnalPersistent}.
The matrix of $a_h$ is a symmetric M-matrix, so that the scheme satisfies a discrete maximum principle.

Throughout this paper, \emph{low-order} refers to this lowest-order finite volume method
with one unknown per control volume, piecewise constant approximations, and two-point fluxes without reconstruction.
For smooth solutions on the uniform meshes used in Section~\ref{sec:num_results},
it is second-order accurate in the discrete $L^2$ norm, $\|P_hu-u_h\|_h=O(h^2)$,
and (at least) first-order accurate in the discrete energy norm, $|P_hu-u_h|_{1,h}=O(h)$.
These are the standard estimates used in Section~\ref{sec:HeatErrAnalSpaceTime},
and the second-order accuracy is confirmed by the smooth test problems mentioned in Section~\ref{sec:num_results}.
On general admissible meshes, the available error estimates for the two-point flux approximation
are in general of first order only \cite{EymardGallouetHerbin2000}.
Note that the $L^2(\Omega)$ norm of $u-u_h$ for the piecewise constant function $u_h$
additionally contains the projection error $\|u-P_hu\|_{L^2(\Omega)}=O(h)$;
therefore, second-order statements for smooth solutions refer to the discrete norm $\|P_hu-u_h\|_h$,
equivalently, to the errors in the cell centers.
For the singular solutions in Sections~\ref{sec:PoisErrAnal} and~\ref{sec:HeatErrAnalPersistent},
the error bounds hold both for $\|u-u_h\|_{L^2(\Omega)}$ and for $\|P_hu-u_h\|_h\le\|u-u_h\|_{L^2(\Omega)}$.
The implementation uses at the boundary a formally first-order accurate discretization;
this is reasonable in view of the homogeneous Dirichlet boundary conditions,
and the numerical tests conform this in Section~\ref{subsec:stationary}.
\end{revblock}



\section{Finite Volume Error for a Poisson Equation with a Dirac Source}
\label{sec:PoisErrAnal}

\subsection{The two-dimensional case}\label{subsec:Pois2D}

We study the Poisson equation with a Dirac measure right-hand side
\begin{equation}
-\Delta u = \delta_{\xx_0} \quad \text{in } \Omega, 
\qquad 
u=0 \quad \text{on } \partial\Omega,
\label{poisson}
\end{equation}
where $\Omega\subset\mathbb{R}^2$ is a bounded domain and $\xx_0\in\Omega$.

Because the right-hand side is a measure, the solution is singular and classical
finite element or finite volume convergence rates are not valid.

Let $G(\cdot,\xx_0)$ denote the Green function of the Laplacian in $\Omega$ with homogeneous Dirichlet boundary conditions. 
In two dimensions the Green function has the form
\begin{equation}
G(\xx,\xx_0) = -\frac{1}{2\pi}\log|\xx-\xx_0| + w(\xx),
\end{equation}
where $w$ is   smooth in a neighborhood of $\xx_0$.

Therefore the solution of \eqref{poisson} can be written as
\begin{equation}
u(\xx) = -\frac{1}{2\pi}\log|\xx-\xx_0| + w(\xx).
\end{equation}

Let $\mathcal{T}_h$ be a quasi-uniform mesh of $\Omega$ with mesh size $h$, and let $u_h$ denote \rev{the finite volume
approximation of \eqref{poisson} by the two-point flux scheme of Section~\ref{sec:num_method},
identified with a piecewise constant function on $\mathcal{T}_h$ (see Subsection~\ref{subsec:FVnotation}).
We first present a heuristic scaling argument that identifies the sources of the error.
It treats $u_h$ as a local approximation of $u$ on each cell;
a rigorous proof, which also accounts for the propagation of local errors through $\Omega$
by the globally coupled discrete problem, is given in Subsection~\ref{subsec:PoisRigorous}.}

Let $K_0$ be the mesh cell containing $\xx_0$. Since the mesh is quasi-uniform,
\[
\operatorname{diam}(K_0)\sim h, 
\qquad |K_0|\sim h^2.
\]

We split the error as
\begin{equation}
\|u-u_h\|_{L^2(\Omega)}^2
=
\|u-u_h\|_{L^2(K_0)}^2
+
\|u-u_h\|_{L^2(\Omega\setminus K_0)}^2.
\end{equation}

We first estimate the $L^2$ norm of the singular part on a ball
\[
B_h(\xx_0)=\{\xx:|\xx-\xx_0|<h\}.
\]

Using polar coordinates,
\begin{align}
\int_{B_h(\xx_0)} |\log|\xx-\xx_0||^2\,d\xx
&=
2\pi\int_0^h (\log r)^2 r\,dr.
\end{align}

A primitive of $r(\log r)^2$ is
\[
\frac{r^2}{2}(\log r)^2
-\frac{r^2}{2}\log r
+\frac{r^2}{4}.
\]

Hence
\begin{equation}
\int_0^h (\log r)^2 r\,dr
=
\frac{h^2}{2}(\log h)^2
-\frac{h^2}{2}\log h
+\frac{h^2}{4}.
\end{equation}

As $h\to0$ the dominant term is
\[
\frac{h^2}{2}(\log h)^2.
\]

Therefore
\begin{equation}
\|u\|_{L^2(B_h(\xx_0))}\sim h|\log h|.
\end{equation}

\begin{revblock}
The size $h|\log h|$ of $u$ on $B_h(\xx_0)$ does not by itself determine the error on $K_0$,
because the constant value $u_{K_0}$ of the discrete solution on $K_0$ need not be small.
Denoting by $\ou_{K_0}=|K_0|^{-1}\int_{K_0}u\,d\xx$ the mean value of $u$ on $K_0$, we split
\begin{equation}\label{eq:nearfield-split}
\|u-u_h\|_{L^2(K_0)}
\le
\|u-\ou_{K_0}\|_{L^2(K_0)} + |K_0|^{1/2}\,|\ou_{K_0}-u_{K_0}|.
\end{equation}
The first term is of order $h$. Indeed, since $\log|\xx-\xx_0| = \log h + \log(|\xx-\xx_0|/h)$,
the logarithmic singularity minus the constant $\log h$ is invariant under the scaling
$\xx-\xx_0 = h\,\hat\xx$, and hence
\[
\|u-\ou_{K_0}\|_{L^2(K_0)}
\le
\frac{1}{2\pi}\,\big\|\log(|\xx-\xx_0|/h)\big\|_{L^2(K_0)} + C h^2
=
\frac{h}{2\pi}\,\big\|\log|\hat\xx|\big\|_{L^2(\hat K_0)} + Ch^2
\le C h,
\]
where $\hat K_0 = h^{-1}(K_0-\xx_0)$ has a diameter of order one.
In particular, the large constant $\frac{1}{2\pi}|\log h|$, which is responsible for the size
$h|\log h|$ of $u$ on $B_h(\xx_0)$, does not contribute to this term.
The second term in \eqref{eq:nearfield-split} measures how well the discrete solution reproduces
the mean value of $u$ on the singular cell. It is a genuinely discrete quantity that cannot be inferred
from properties of $u$ alone, since $u_{K_0}$ depends on the solution of the globally coupled discrete problem.
It is estimated rigorously in Subsection~\ref{subsec:PoisRigorous} 
with the result $|K_0|^{1/2}|\ou_{K_0}-u_{K_0}| \le C h|\log h|^{1/2}$. Hence
\begin{equation}
\|u-u_h\|_{L^2(K_0)} \lesssim h|\log h|^{1/2},
\end{equation}
which is smaller than the size $h|\log h|$ of $u$ itself on $B_h(\xx_0)$:
the discrete solution captures the logarithmic growth of $u$ in the singular cell to leading order.
\end{revblock}

Away from $\xx_0$, the solution is smooth. 

For a cell $K$ located at distance $r$ from $\xx_0$ we have
\[
|\nabla^2u(\xx)|\sim \frac{1}{r^2}.
\]

Standard interpolation estimates give\rev{, if $u_h$ is heuristically regarded as a local
second-order approximation of $u$ on $K$,}
\[
\|u-u_h\|_{L^2(K)} \lesssim h^2 |\nabla^2u| |K|^{1/2}.
\]

Since $|K|^{1/2}\sim h$ this yields
\[
\|u-u_h\|_{L^2(K)}\sim \frac{h^3}{r^2}.
\]

Summing contributions over annuli around $\xx_0$ gives a total error
\begin{equation}
\|u-u_h\|_{L^2(\Omega\setminus K_0)}\lesssim h.
\end{equation}

\begin{revblock}
This heuristic far-field bound ignores that $u_h$ is not a local interpolant of $u$:
the local truncation errors, in particular the flux errors of order one at the faces of $K_0$,
are propagated over $\Omega\setminus K_0$ by the discrete Green's function.
Moreover, for the piecewise constant function $u_h$, the local error on a cell $K$
is at best of order $h|\nabla u|\,|K|^{1/2}$, which after summation over the annuli yields $h|\log h|^{1/2}$ instead of $h$.
Both effects are taken into account in Theorem~\ref{thm:Poisson} in Subsection~\ref{subsec:PoisRigorous},
which shows that the overall error is of order $h|\log h|^{1/2}$, see \eqref{equ:ResultPois2D}.
\end{revblock}

\subsection{The three-dimensional case}\label{subsec:Pois3D}

We now consider the analogous problem in three dimensions:
\begin{equation}
-\Delta u = \delta_{\xx_0}\quad \text{in } \Omega\subset\mathbb{R}^3,
\qquad
u=0\quad \text{on } \partial\Omega.
\end{equation}

In three dimensions the Green function has the form
\begin{equation}
u(\xx)=\frac{1}{4\pi |\xx-\xx_0|}+w(\xx),
\end{equation}
where $w$ is smooth near $\xx_0$. Thus the singular part is
\[
g(\xx)=\frac{1}{4\pi r},\qquad r=|\xx-\xx_0|.
\]

Let $K_0$ be the mesh cell containing $\xx_0$, with diameter $h$. Comparing $K_0$
with a ball $B_h(\xx_0)$, we obtain
\[
\|g\|_{L^2(B_h(\xx_0))}^2
\sim
\int_{B_h(\xx_0)} \frac1{r^2}\,d\xx.
\]
Using spherical coordinates,
\[
\int_{B_h(\xx_0)} \frac1{r^2}\,d\xx
=
4\pi\int_0^h \frac1{r^2}r^2\,dr
=
4\pi\int_0^h dr
\sim h.
\]
Therefore
\begin{equation}
  \|g\|_{L^2(B_h(\xx_0))}\sim h^{1/2}.
\end{equation}

\rev{We use again the splitting \eqref{eq:nearfield-split}.
In contrast to the two-dimensional case, the first term is now of the same order as $\|g\|_{L^2(B_h(\xx_0))}$:
under the scaling $\xx-\xx_0=h\hat\xx$, the singular part becomes $g = h^{-1}\hat g(\hat\xx)$ with $\hat g(\hat\xx)=1/(4\pi|\hat\xx|)$,
and since the non-constant function $\hat g$ cannot be approximated by constants on $\hat K_0$,
$\|u-\ou_{K_0}\|_{L^2(K_0)} \sim h^{-1}h^{3/2} = h^{1/2}$.
The second term is again estimated in Subsection~\ref{subsec:PoisRigorous} (Remark~\ref{rem:nearfield})
and is at most of order $h^{1/2}$.
Since $u_h$ is constant on $K_0$, the first term is also a lower bound for the error on $K_0$, and thus}
\begin{equation}
\|u-u_h\|_{L^2(K_0)}\sim h^{1/2}.
\end{equation}

Away from $\xx_0$, the solution is smooth and
\[
|\nabla^2u(x)|\sim \frac1{r^3}.
\]
Let $K$ be a mesh cell at distance $r$ from $x_0$. Then a standard local estimate gives\rev{, again
regarding $u_h$ heuristically as a local second-order approximation of $u$,}
\[
\|u-u_h\|_{L^2(K)} \lesssim h^2 |\nabla^2u|\, |K|^{1/2}.
\]
Since in three dimensions $|K|^{1/2}\sim h^{3/2}$, it follows that
\[
\|u-u_h\|_{L^2(K)}
\lesssim
h^2 \frac1{r^3} h^{3/2}
=
\frac{h^{7/2}}{r^3}.
\]
Squaring,
\[
\|u-u_h\|_{L^2(K)}^2
\lesssim
\frac{h^7}{r^6}.
\]

Now consider a spherical shell
\[
A(r,r+dr)=\{x: r<|x-x_0|<r+dr\}.
\]
Its volume is approximately
\[
4\pi r^2\,dr.
\]
Since each cell has volume $\sim h^3$, the number of cells in the shell is
\[
N(r)\sim \frac{4\pi r^2\,dr}{h^3}.
\]
Therefore the shell contributes
\[
dE_{\mathrm{far}}^2(r)
\sim
N(r)\frac{h^7}{r^6}
=
\frac{4\pi r^2\,dr}{h^3}\frac{h^7}{r^6}
\sim
h^4\frac{dr}{r^4}.
\]
Integrating from $r\sim h$ to $r\sim 1$ gives
\[
E_{\mathrm{far}}^2
\sim
h^4 \int_h^1 r^{-4}\,dr.
\]
Since
\[
\int r^{-4}\,dr = -\frac{1}{3r^3},
\]
we obtain
\[
E_{\mathrm{far}}^2
\sim
h^4\left(\frac{1}{3h^3}-\frac13\right)
\sim h.
\]
Hence
\begin{equation}\label{equ:ResultPois3D1}
  \|u-u_h\|_{L^2(\Omega\setminus K_0)}\lesssim h^{1/2}.
\end{equation}

Both the near-field and far-field contributions are of the same order:
\[
\|u-u_h\|_{L^2(K_0)}\sim h^{1/2},
\qquad
\|u-u_h\|_{L^2(\Omega\setminus K_0)}\lesssim h^{1/2}.
\]
\rev{These heuristic scaling arguments therefore suggest}
\[
\|u-u_h\|_{L^2(\Omega)}\rev{\lesssim} h^{1/2},
\]
\rev{which is proved rigorously in Theorem~\ref{thm:Poisson} below; as in two dimensions,
the rigorous proof also accounts for the global propagation of the local errors.}

\begin{revblock}
\subsection{Rigorous error bound including the global propagation of local errors}
\label{subsec:PoisRigorous}

The scaling arguments of Subsections~\ref{subsec:Pois2D} and~\ref{subsec:Pois3D}
treat $u_h$ as a local approximation of $u$ on each cell.
Since $u_h$ is the solution of a globally coupled linear system,
local truncation errors---in particular the flux errors of order one
at the faces of the singular cell $K_0$---are propagated through $\Omega$
by the discrete solution operator, i.e., by the discrete Green's function.
We now give a proof that takes this propagation into account.
We work in the setting of the numerical experiments of Section~\ref{sec:num_results}
and use the notation of Subsection~\ref{subsec:FVnotation}.

\medskip
\noindent\textbf{Assumption (A).}
$\Omega\subset\R^d$, $d\in\{2,3\}$, is a rectangle or a rectangular box,
$\mathcal{T}_h$ is a uniform Cartesian mesh of $\Omega$ consisting of squares or cubes of side length $h$,
the scheme is the two-point flux approximation \eqref{eq:TPFA} with the cell centers as points $\xx_K$
(so that $d_e=h$ for interior faces and $d_e=h/2$ for boundary faces; the vertex-centered variant can be treated in the same way),
and there is a constant $c_0>0$ independent of $h$ such that
$\operatorname{dist}(\xx_0,\partial K_0)\ge c_0 h$ (for instance, $\xx_0$ is the center of $K_0$).

\medskip
For a face $e\in\mathcal{E}$ and a control volume $K$ with $e\subset\partial K$,
let $\nn_{K,e}$ be the unit normal on $e$ pointing out of $K$.
We compare the exact fluxes $\Phi_{K,e}(u) := -\int_e \nabla u\cdot\nn_{K,e}\,ds$
with the discrete fluxes $\Phi^h_{K,e}(v) := \frac{|e|}{d_e}(v_K - v_L) = -F_{K,e}(v)$,
where $L$ is the neighbor of $K$ across $e$ and $v_L:=0$ for $e\subset\partial\Omega$,
and define the flux consistency errors
\begin{equation}\label{eq:fluxerror}
\tau_{K,e} := \Phi^h_{K,e}(P_h u) - \Phi_{K,e}(u).
\end{equation}
Both fluxes are conservative, so that $\tau_{L,e}=-\tau_{K,e}$,
and we write $\tau_e := \tau_{K,e}$ with the orientation used in the definition of $D_e$.

\begin{lemma}[Error representation]\label{lem:errrep}
Let Assumption~(A) hold, let $u$ be the solution of \eqref{poisson},
and let $u_h$ be its finite volume approximation, $-\Delta_h u_h = \mu_h$.
Then $e_h := P_h u - u_h$ satisfies
\begin{equation}\label{eq:errrep}
a_h(e_h, w) = \sum_{e\in\mathcal{E}} \tau_e\, D_e w
\qquad\text{for all grid functions } w.
\end{equation}
\end{lemma}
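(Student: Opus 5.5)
The plan is to verify \eqref{eq:errrep} cell by cell: integrate the PDE over each control volume, compare with the discrete equation for $u_h$, and then sum by parts over the faces. The summation by parts is the same discrete Green formula that produced $a_h(v,w)=(-\Delta_h v,w)_h$ in Subsection~\ref{subsec:FVnotation}.

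\textbf{Step 1: the exact balance on each cell.} Fix $K\in\mathcal{T}_h$. We first integrate $-\Delta u=\delta_{\xx_0}$ over $K$. Away from $\xx_0$, $u$ is smooth, and the Dirac mass lies in the interior of $K_0$ because $\operatorname{dist}(\xx_0,\partial K_0)\ge c_0h$. Hence the divergence theorem applies. On $K_0$ one applies it to the regular part $w$ and uses the Green identity for the fundamental solution, or equivalently excises a small ball around $\xx_0$ and lets its radius tend to zero. The result is
\[
\sum_{e\subset\partial K}\Phi_{K,e}(u)=\int_K\delta_{\xx_0}=|K|\,(\mu_h)_K .
\]
All face integrals are finite, since $\nabla u$ is bounded on $\partial K$; this again uses $c_0>0$ for the faces of $K_0$.

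\textbf{Step 2: the discrete balance and the cell error equation.} By definition of $\Delta_h$, the scheme $-\Delta_h u_h=\mu_h$ reads
\[
\sum_{e\subset\partial K}\Phi^h_{K,e}(u_h)=|K|(\mu_h)_K .
\]
Boundary faces are handled with $v_L=0$; this is consistent because $P_hu$ carries no boundary unknowns and $u=0$ on $\partial\Omega$. Subtracting the two balances and using the linearity of $\Phi^h_{K,e}$ gives, for every $K$,
\[
\sum_{e\subset\partial K}\Phi^h_{K,e}(e_h)
=\sum_{e\subset\partial K}\bigl(\Phi^h_{K,e}(P_hu)-\Phi_{K,e}(u)\bigr)
=\sum_{e\subset\partial K}\tau_{K,e}.
\]

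\textbf{Step 3: summation by parts.} Multiply the cell error equation by $w_K$ and sum over $K$. On the left-hand side, each interior face $e$ between $K$ and $L$ appears twice, with
\[
\Phi^h_{K,e}(e_h)=\tfrac{|e|}{d_e}D_ee_h=-\Phi^h_{L,e}(e_h)
\]
when oriented as in $D_e$. It therefore contributes $\tfrac{|e|}{d_e}D_ee_h\,(w_K-w_L)=\tfrac{|e|}{d_e}D_ee_h\,D_ew$. A boundary face contributes $\tfrac{|e|}{d_e}(e_h)_Kw_K$, which is of the same form. Summing over faces gives $a_h(e_h,w)$.

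On the right-hand side, conservativity $\tau_{L,e}=-\tau_{K,e}$ regroups the interior contributions in the same way to $\tau_eD_ew$, and boundary faces give $\tau_ew_K=\tau_eD_ew$. This yields \eqref{eq:errrep}.

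\textbf{Main obstacle.} The only delicate point is Step 1 on $K_0$: justifying the flux balance with a measure source and a singular $u$. The plan is to handle it by the excision argument, using that $\nabla u\in L^1$ near $\xx_0$ and that the flux of the fundamental solution through small spheres equals $-1$. Everything else is bookkeeping.
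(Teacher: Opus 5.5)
Your proposal is correct and follows the paper's proof essentially step by step. Both establish the exact flux balance on each cell, subtract the scheme's balance to get $\sum_{e\subset\partial K}\Phi^h_{K,e}(e_h)=\sum_{e\subset\partial K}\tau_{K,e}$, then multiply by $w_K$ and regroup the two contributions of each interior face using conservativity. The only difference is that you spell out the justification of $\sum_{e\subset\partial K_0}\Phi_{K_0,e}(u)=1$ by excising a small ball around $\xx_0$, whereas the paper compresses this into an appeal to the divergence theorem together with $\xx_0\notin\partial K$.
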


\begin{proof}
Since $\xx_0\notin\partial K$ for all $K\in\mathcal{T}_h$, the solution $u$ is smooth in a neighborhood of each face,
and the divergence theorem together with $-\Delta u=\delta_{\xx_0}$ gives the exact flux balance
$\sum_{e\subset\partial K}\Phi_{K,e}(u) = 1$ if $K=K_0$ and $=0$ otherwise.
The scheme reads $\sum_{e\subset\partial K}\Phi^h_{K,e}(u_h) = |K|\,(\mu_h)_K$, which is the same right-hand side.
Subtracting and using the linearity of $\Phi^h_{K,e}$ yields
$\sum_{e\subset\partial K}\Phi^h_{K,e}(e_h) = \sum_{e\subset\partial K}\tau_{K,e}$ for every $K$.
Multiplying by $w_K$, summing over $K$, and collecting the two contributions of each interior face gives \eqref{eq:errrep}.
\end{proof}

\begin{lemma}[Flux consistency]\label{lem:consistency}
Let Assumption~(A) hold. Then there are constants $C$ and $h_0>0$ independent of $h$ such that for $h\le h_0$
\begin{equation}\label{eq:tau-bounds}
|\tau_e| \le
\begin{cases}
C, & \operatorname{dist}(e,\xx_0) < 2h,\\
C\,h^{d+1}\operatorname{dist}(e,\xx_0)^{-d-1}, & \operatorname{dist}(e,\xx_0)\ge 2h,\ e\not\subset\partial\Omega,\\
C\,h^{d}, & e\subset\partial\Omega,
\end{cases}
\;\text{and hence}\;
\sum_{e\in\mathcal{E}}|\tau_e| \le C.
\end{equation}
\end{lemma}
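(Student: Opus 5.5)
We split $u=g+w$, with $g$ the fundamental solution centered at $\xx_0$ ($g=-\frac{1}{2\pi}\log r$ for $d=2$, $g=\frac{1}{4\pi r}$ for $d=3$, $r=|\xx-\xx_0|$) and $w$ harmonic in $\Omega$ with $w=-g$ on $\partial\Omega$. We treat the two parts separately and use the linearity of $\tau_e$ in $u$. Under Assumption~(A) we have $\operatorname{dist}(\xx_0,\partial\Omega)\ge c>0$ for fixed $\xx_0\in\Omega$, so $g|_{\partial\Omega}$ is smooth. Since $\Omega$ is a rectangle or box, the harmonic part $w$ is $H^2$, but its higher derivatives may blow up at the corners. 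Because the third case of \eqref{eq:tau-bounds} only asks for $Ch^d$, which is first order relative to $|e|\sim h^{d-1}$, I only need $w\in W^{2,\infty}$ near the boundary. I expect to obtain this by odd reflection across the faces: $g$ is smooth near $\partial\Omega$, and corner compatibility holds because $u=0$ on both adjacent faces. This is the point I am least sure of, and it may need care at corners. I then handle interior faces, where $w$ is smooth with bounded derivatives on compact subsets, and the singular part $g$, which is explicit.

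\emph{Interior faces with $\operatorname{dist}(e,\xx_0)\ge 2h$.} Let $e$ separate cells $K,L$ with centers $\xx_K,\xx_L$ and $\nn$ pointing from $K$ to $L$. Then
\[
\tau_e=\frac{|e|}{h}\big(\ou_K-\ou_L\big)+\int_e\partial_\nn u\,ds .
\]
I will Taylor-expand $u$ about the face midpoint $\xx_e$. Both $h^{-1}(\ou_K-\ou_L)$ and $-|e|^{-1}\int_e\partial_\nn u$ equal $-\partial_\nn u(\xx_e)$ plus corrections. The even-order terms cancel by the symmetry of $K\cup L$ about $e$ and of $e$ about $\xx_e$ (reflection in the face and in the face's own midlines). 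The remainder is therefore bounded by $|e|\,h^2\sup_{K\cup L}|\nabla^3u|$. For the singular part, $|\nabla^3 g|\le C r^{-d-1}$, and on $K\cup L$ we have $r\ge \operatorname{dist}(e,\xx_0)-h\ge \frac12\operatorname{dist}(e,\xx_0)$. This gives $|\tau_e|\le C h^{d-1}h^2\operatorname{dist}(e,\xx_0)^{-d-1}=Ch^{d+1}\operatorname{dist}(e,\xx_0)^{-d-1}$. For the smooth part $w$, interior faces have $\operatorname{dist}(e,\xx_0)\le\operatorname{diam}\Omega$, so the contribution $Ch^{d+1}$ is also $\le Ch^{d+1}\operatorname{dist}(e,\xx_0)^{-d-1}$ up to a constant. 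Interior faces near $\partial\Omega$ are handled with the $W^{2,\infty}$ bound: if third derivatives of $w$ are unavailable there, the second-order remainder still gives $Ch^{d}$, which I absorb in the same way using $\operatorname{dist}(e,\xx_0)\le C$. I will double-check this absorption, because it loses one power of $h$.

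\emph{Boundary faces.} Here $u=0$ on $e$, and the discrete flux is $\frac{|e|}{h/2}\ou_K$. A one-sided Taylor expansion about the face, using $u|_e=0$, gives $\ou_K=\frac h2\,\partial_{-\nn}u+O(h^2\|u\|_{W^{2,\infty}})$. Hence $|\tau_e|\le C|e|h=Ch^d$.

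\emph{Faces with $\operatorname{dist}(e,\xx_0)<2h$.} These are the main obstacle, although only $O(1)$ of them exist. I will bound the discrete and exact fluxes separately. For the exact flux, $|\int_e\partial_\nn g|\le\int_e r^{1-d}\,ds\le C$, since $r\ge c_0h$ on the faces of $K_0$ and $r\gtrsim h$ on the other nearby faces by Assumption~(A), while $|e|\sim h^{d-1}$. For the discrete flux I use scaling: $\ou_K-\ou_L$ for $g$ is invariant under $\xx-\xx_0=h\hat\xx$ when $d=2$, because the additive $\log h$ cancels in the difference. It scales like $h^{-1}$ when $d=3$. In both cases $|\ou_K-\ou_L|\le Ch^{2-d}$, and $g$ is locally integrable, so the cell averages are finite. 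Multiplying by $|e|/h=h^{d-2}$ gives $O(1)$, and the $w$ contributions are $O(h^d)$.

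\emph{Summation.} There are $O(1)$ near faces, each contributing $O(1)$. There are $O(h^{1-d})$ boundary faces, each contributing $O(h^d)$, for a total of $O(h)$. Finally, the number of faces with $\operatorname{dist}(e,\xx_0)\sim \rho$ is at most $C\rho^{d-1}h^{1-d}$ per dyadic shell of width $\rho$, so
\[
\sum_{\text{far}}|\tau_e|\le C\sum_{\rho=2^jh\le C}\frac{\rho^{d}}{h^d}\cdot\frac{h^{d+1}}{\rho^{d+1}}=C\sum_j 2^{-j}\le C.
\]
Adding the three groups gives $\sum_e|\tau_e|\le C$, which completes the plan.
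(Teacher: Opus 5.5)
Your plan follows the paper's proof closely.
\begin{itemize}
\item For the far faces you use symmetric Taylor cancellation on $K\cup L$. The paper writes the difference quotient as a hat-weighted average of $\partial_i u$ over $K\cup L$, which is the same computation.
\item For the boundedly many near faces you bound the exact and discrete fluxes separately by $O(1)$. Your scaling argument for the difference of cell averages of $g$ is a valid substitute for the paper's bound by $h^{-1}\int_{K\cup L}|\nabla u|\,d\xx$.
\item At boundary faces you use a one-sided expansion, and you finish by counting faces.
\end{itemize}

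The step that fails is your fallback for interior faces near $\partial\Omega$. Those faces satisfy $\operatorname{dist}(e,\xx_0)\ge c>0$, so the second case of the lemma demands $|\tau_e|\le Ch^{d+1}$ there. A remainder of size $Ch^{d}$ exceeds this by a factor $h^{-1}$, and no constant absorbs that. The sum $\sum_e|\tau_e|\le C$ would survive, since there are only $O(h^{-d})$ faces in total. The pointwise statement of the lemma would not.

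What closes the gap is the fact the paper's proof starts from, and your remark on corner compatibility already contains the reason.
\begin{itemize}
\item All corners of a rectangle or box are right angles. So the odd reflection of $u$ across one side still vanishes on the extension of each adjacent side, and can be reflected again (twice along an edge, three times at a vertex of a box).
\item Apply the Schwarz reflection principle in a ball $B_\epsilon(p)$ around any $p\in\partial\Omega$, with $\epsilon<\operatorname{dist}(\xx_0,\partial\Omega)$. Then $u$ extends to a harmonic function on all of $B_\epsilon(p)$, corners and edges included.
\item Interior estimates for harmonic functions, together with compactness of $\partial\Omega$, then bound every derivative of $u$ uniformly near $\partial\Omega$.
\end{itemize}
Hence $u\in C^\infty(\overline\Omega\setminus\{\xx_0\})$ with $|\nabla^k u(\xx)|\le C|\xx-\xx_0|^{2-d-k}$, not merely $W^{2,\infty}$ near the boundary. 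So $w=u-g$ is smooth on $\overline\Omega$, and its contribution on interior faces is $Ch^{d+1}$ uniformly up to $\partial\Omega$. With this the fallback disappears. The split $u=g+w$ also becomes unnecessary: all three cases can be run directly on $u$ with these derivative bounds, as the paper does.

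One minor slip in the summation: a dyadic shell of radius and width $\rho$ contains $O(\rho^{d}h^{-d})$ faces, not $O(\rho^{d-1}h^{1-d})$. Your displayed sum already uses the correct count.
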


\begin{proof}
By odd reflection across the sides of $\Omega$, the solution $u$ is smooth on $\overline\Omega\setminus\{\xx_0\}$,
and $|\nabla^k u(\xx)|\le C|\xx-\xx_0|^{2-d-k}$ for $k=1,2,3$.
Let $e$ be an interior face between $K$ and $L$ with $\nn_{K,e}=\vec{e}_i$.
A direct computation shows that the discrete flux density is a weighted average of $\partial_i u$ over $Z_e:=K\cup L$,
\[
\frac{(P_hu)_L-(P_hu)_K}{h} = \frac{1}{|e|}\int_{e}\int_{-h}^{h}\phi_h(\sigma)\,\partial_i u(\yy+\sigma\vec{e}_i)\,d\sigma\,ds(\yy),
\qquad \phi_h(\sigma)=\frac{h-|\sigma|}{h^2},
\]
with the symmetric weight $\phi_h\ge0$, $\int_{-h}^h\phi_h\,d\sigma=1$, whereas the exact flux density is $|e|^{-1}\int_e\partial_iu\,ds$.
If $\operatorname{dist}(e,\xx_0)\ge 2h$, then $|\xx-\xx_0|\ge\operatorname{dist}(e,\xx_0)/2$ on $Z_e$;
a Taylor expansion in $\sigma$, in which the linear term vanishes by the symmetry of $\phi_h$, gives
$|\tau_e|\le C|e|\,h^2\sup_{Z_e}|\nabla^3u|\le Ch^{d+1}\operatorname{dist}(e,\xx_0)^{-d-1}$.
If $\operatorname{dist}(e,\xx_0)<2h$, then $e$ does not intersect the open cell $K_0$, so that
$\operatorname{dist}(e,\xx_0)\ge\operatorname{dist}(\xx_0,\partial K_0)\ge c_0h$ and
$|\Phi_{K,e}(u)|\le|e|\sup_e|\nabla u|\le Ch^{d-1}(c_0h)^{1-d}\le C$;
moreover, $Z_e\subset B_{5h}(\xx_0)$ and $\phi_h\le h^{-1}$ give
$|\Phi^h_{K,e}(P_hu)|\le h^{-1}\int_{Z_e}|\nabla u|\,d\xx\le Ch^{-1}\int_{B_{5h}(\xx_0)}|\xx-\xx_0|^{1-d}\,d\xx\le C$.
For a boundary face $e\subset\partial K\cap\partial\Omega$, a Taylor expansion in the normal direction, using $u=0$ on $e$
and $d_e=h/2$, gives $|\tau_e|\le C|e|\,h\sup_K|\nabla^2u|\le Ch^d$, since $K$ stays away from $\xx_0$ for $h\le h_0$.
Finally, there are at most $C$ faces with $\operatorname{dist}(e,\xx_0)<2h$,
at most $Ck^{d-1}$ interior faces with $kh\le\operatorname{dist}(e,\xx_0)<(k+1)h$, and at most $Ch^{1-d}$ boundary faces.
Hence $\sum_e|\tau_e|\le C\big(1+\sum_{k\ge2}k^{d-1}k^{-d-1}+h\big)\le C$.
\end{proof}

\begin{lemma}[Stability of the discrete dual problem]\label{lem:dual}
Let Assumption~(A) hold, let $\psi$ be a grid function, and let $z_h$ be the solution of
$a_h(w,z_h) = (\psi,w)_h$ for all grid functions $w$, i.e., $-\Delta_h z_h=\psi$. Then
\begin{equation}\label{eq:dualstab}
\max_{e\in\mathcal{E}}|D_e z_h| \le C\,h\,\ell_h\,\|\psi\|_h,
\qquad
\ell_h :=
\begin{cases}
(1+|\log h|)^{1/2}, & d=2,\\
h^{-1/2}, & d=3,
\end{cases}
\end{equation}
with $C$ independent of $h$ and $\psi$.
\end{lemma}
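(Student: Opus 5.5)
Our plan is to represent $z_h$ through the discrete Green's function and reduce the claim to a pointwise bound on discrete gradients of that Green's function. For each cell $L$, let $G_L$ be the grid function with $-\Delta_h G_L = |L|^{-1}\mathbf{1}_L$, so that $(z_h)_K=\sum_L |L|\,\psi_L\,(G_L)_K$ by linearity and symmetry of $a_h$. For a fixed face $e$, the difference $D_e z_h$ is then
\[
D_e z_h=\sum_L |L|\,\psi_L\,D_e G_L = (\psi, g_e)_h,\qquad (g_e)_L := D_e G_L .
\]
By symmetry of the matrix of $a_h$, $g_e$ is itself the solution of $-\Delta_h g_e = \chi_e$, where $\chi_e$ is the grid function with $|K|^{-1}$ and $-|L|^{-1}$ on the two cells adjacent to $e$ (a discrete dipole). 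Cauchy--Schwarz gives $|D_e z_h|\le \|\psi\|_h\,\|g_e\|_h$. The lemma therefore reduces to the uniform bound
\[
\|g_e\|_h\le C\,h\,\ell_h \quad\text{for all } e\in\mathcal{E},
\]
that is, an $L^2$ bound on the discrete dipole Green's function, equivalently on the discrete mixed derivative of the discrete Green's function.

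To obtain this bound we will use the pointwise decay estimate for discrete derivatives of the discrete Green's function on uniform Cartesian grids:
\[
|D_e G_L|\le C\,h^{d}\,\bigl(h+\operatorname{dist}(e,L)\bigr)^{1-d}.
\]
In the interior this follows from the classical asymptotics of the lattice Green's function, whose discrete gradient decays like $r^{1-d}$ with constants uniform in the lattice. Near the rectangular boundary it follows by odd reflection. Under Assumption~(A) the scheme on a box with Dirichlet data coincides, after extension by odd reflection, with the full-lattice (or periodic-lattice) problem. The only caveat is the boundary faces with $d_e=h/2$, which we will handle either by a reflection about the boundary face for the vertex-centered variant or by a perturbation argument.

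Granting the decay estimate, the remaining work is summation over dyadic annuli. We have
\[
\|g_e\|_h^2=\sum_L h^d |D_eG_L|^2\le C h^{d}\sum_L h^{2d}\bigl(h+\operatorname{dist}\bigr)^{2-2d}.
\]
With $\operatorname{dist}\approx kh$ and $\sim k^{d-1}$ cells per layer, this becomes $C h^{2}\sum_{k=1}^{1/h}k^{d-1}k^{2-2d}=Ch^2\sum_k k^{1-d}$. For $d=2$ the sum is of order $|\log h|$, and for $d=3$ it is of order $h^{-1}$. This gives exactly $h^2\ell_h^2$.

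The main obstacle is the uniform pointwise gradient bound for the discrete Green's function up to the boundary. If citing lattice Green's function asymptotics is judged insufficient, a fallback is a discrete energy argument. One uses $a_h(g_e,g_e)=(\chi_e,g_e)_h=D_e g_e$ together with the discrete Sobolev or log-type inequalities for piecewise constant functions. These are $\|v\|_{L^\infty}\le C(1+|\log h|)^{1/2}|v|_{1,h}$ for $d=2$, and $\|v\|_{L^6}$-based bounds that give $h^{-1/2}$ scaling for $d=3$. Applying them to $z_h$ directly yields
\[
|D_e z_h|\le 2\|z_h\|_{L^\infty}\le C\ell_h\,|z_h|_{1,h}\le C\ell_h\|\psi\|_h,
\]
which misses the factor $h$. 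To recover it, we localize: we split $\psi=\psi\mathbf{1}_{B_{Rh}(e)}+\psi\mathbf{1}_{\text{far}}$ and treat the far part through interior discrete regularity, using difference quotients of discrete harmonic functions. We expect this localization to be the delicate step.
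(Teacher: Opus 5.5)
Your reduction is correct and differs from the paper's route: $D_ez_h=(\psi,g_e)_h$ with $-\Delta_hg_e=\chi_e$. For a boundary face, $\chi_e$ is a single-cell source at the boundary cell rather than a dipole. So the lemma is equivalent to $\max_e\|g_e\|_h\le Ch\ell_h$.

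The summation step, however, has scaling errors that cancel only by accident.

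\emph{The pointwise bound is mis-scaled.} With your normalization $-\Delta_hG_L=|L|^{-1}\mathbf{1}_L$, $G_L$ approximates the continuous Green's function. Hence $D_eG_L\approx h\,\partial_iG$, and the correct bound is $|D_eG_L|\le C\,h\,(h+\operatorname{dist}(e,L))^{1-d}$, not $Ch^d(\cdot)^{1-d}$. For instance, for $d=2$ the matrix of $a_h$ is the $h$-independent lattice Laplacian, whose Green's function has increments of size $1/k$ at lattice distance $k=r/h$.

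\emph{The sum does not follow from your bound.} Your stated bound would give $Ch^{d+2}\sum_kk^{1-d}$, not $Ch^2\sum_kk^{1-d}$. Moreover, for $d=3$ the sum $\sum_{k\le 1/h}k^{-2}$ is bounded, not of order $h^{-1}$; you are conflating it with $\int_h^1r^{-2}\,dr$.

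With the correct pointwise bound,
\[
\|g_e\|_h^2\le C\sum_{L}h^{d}\,h^{2}\,\bigl(h+\operatorname{dist}(e,L)\bigr)^{2-2d}\le C\,h^{4-d}\sum_{k=1}^{C/h}k^{1-d},
\]
which is $Ch^2|\log h|$ for $d=2$ and $Ch$ for $d=3$, i.e.\ $Ch^2\ell_h^2$, as required.

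The boundary faces with $d_e=h/2$ are not a real caveat. The ghost value $-v_K$ placed $h/2$ beyond the face reproduces the flux $v_K/(h/2)$ exactly. Odd reflection therefore turns the cell-centered Dirichlet problem into a periodic lattice problem with $2^d$ signed image sources. Still, the uniform gradient bound for that periodic lattice Green's function is the entire analytic content of your argument, and you cite it rather than prove it.

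The paper needs no Green's function asymptotics. On the uniform Cartesian mesh, $-\Delta_h$ is a Kronecker sum of commuting symmetric positive definite tridiagonal matrices. Expanding $\|\Delta_hz_h\|_h^2$ therefore bounds all pure and mixed second difference quotients of $z_h$ by $\|\psi\|_h$, which is discrete $H^2$ regularity. The argument then runs:
\begin{itemize}
\item each first difference quotient $\partial_{h,i}z_h$ has discrete $H^1$ norm at most $C\|\psi\|_h$;
\item the discrete Sobolev inequality (logarithmic for $d=2$, $L^6$ plus an inverse estimate for $d=3$), applied to $\partial_{h,i}z_h$, gives $\max|\partial_{h,i}z_h|\le C\ell_h\|\psi\|_h$;
\item $D_ez_h=h\,\partial_{h,i}z_h$ supplies the factor $h$.
\end{itemize}
This is exactly the repair of your fallback. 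You lost the factor $h$ by applying the Sobolev inequality to $z_h$ itself. Applying it one difference quotient higher needs only discrete $H^2$ regularity, which is free here, and no localization.

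The paper's route is short and self-contained but tied to the tensor-product structure. Yours, once the scaling is fixed, has two advantages. It identifies $\|g_e\|_h$ as the exact operator norm of $\psi\mapsto D_ez_h$, which makes the sharpness of $h\ell_h$ transparent. It would also transfer to meshes where pointwise bounds for discrete Green's functions are available but there is no Kronecker structure.
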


\begin{proof}
Under Assumption~(A), the matrix of $-\Delta_h$ is the Kronecker sum
$\sum_{i=1}^d I\otimes\cdots\otimes T_i\otimes\cdots\otimes I$ of symmetric positive definite tridiagonal matrices $T_i$,
one for each coordinate direction.
Since these summands commute, expanding $\|\Delta_h z_h\|_h^2$ yields
$\|(I\otimes\cdots\otimes T_i\otimes\cdots\otimes I)z_h\|_h\le\|\Delta_h z_h\|_h=\|\psi\|_h$,
and similarly for the mixed terms involving $T_i^{1/2}\otimes T_j^{1/2}$, $i\neq j$.
In terms of difference quotients, this is a discrete $H^2$-regularity estimate:
all second difference quotients of $z_h$, including the mixed ones, are bounded by $C\|\psi\|_h$ in the discrete $L^2$ norm.
Consequently, each first difference quotient $\partial_{h,i}z_h$,
regarded as a grid function on the faces normal to the $i$-th coordinate direction,
has a discrete $H^1$ norm bounded by $C\|\psi\|_h$.
The discrete Sobolev inequality $\|v\|_{\infty}\le C(1+|\log h|)^{1/2}\|v\|_{1,h}$ in two dimensions \cite{Bramble1966},
and the discrete Sobolev inequality $\|v\|_{L^6_h}\le C\|v\|_{1,h}$ combined with the inverse estimate
$\|v\|_\infty\le h^{-1/2}\|v\|_{L^6_h}$ in three dimensions, give $\max|\partial_{h,i}z_h|\le C\ell_h\|\psi\|_h$.
Since $|D_e z_h| = h\,|\partial_{h,i}z_h|$ on the faces normal to the $i$-th direction
($h/2$ instead of $h$ on boundary faces), \eqref{eq:dualstab} follows.
\end{proof}

\begin{theorem}\label{thm:Poisson}
Let Assumption~(A) hold, let $u$ be the solution of \eqref{poisson},
and let $u_h$ be its finite volume approximation, identified with a piecewise constant function.
Then there are constants $C$ and $h_0>0$ independent of $h$ such that for all $h\le h_0$
\begin{alignat}{2}
\|u-u_h\|_{L^2(\Omega)} &\le C\,h\,|\log h|^{1/2}, &\qquad& d=2, \label{equ:ResultPois2D}\\
\|u-u_h\|_{L^2(\Omega)} &\le C\,h^{1/2}, && d=3. \label{equ:ResultPois3D}
\end{alignat}
The same bounds hold for the discrete error $\|P_hu-u_h\|_h$.
\end{theorem}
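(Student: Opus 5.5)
The plan is a duality (Aubin--Nitsche type) argument on the discrete level for $e_h = P_hu-u_h$, combined with a direct estimate of the projection error $u-P_hu$. We use the triangle inequality
\[
\|u-u_h\|_{L^2(\Omega)} \le \|u-P_hu\|_{L^2(\Omega)} + \|P_hu-u_h\|_h ,
\]
and bound the two terms separately. The discrete error $\|e_h\|_h$ is itself bounded by $\|u-u_h\|_{L^2(\Omega)}$, since $P_h$ is an $L^2$-contraction and $P_hu_h=u_h$. The discrete claim therefore follows from the continuous one, and the continuous one follows from the two separate bounds.

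\textbf{Step 1: discrete error via duality.} Take $\psi=e_h$ in Lemma~\ref{lem:dual} and let $z_h$ solve $-\Delta_h z_h=e_h$. By symmetry of $a_h$ and Lemma~\ref{lem:errrep} with $w=z_h$,
\[
\|e_h\|_h^2=(e_h,e_h)_h=a_h(e_h,z_h)=\sum_{e\in\mathcal{E}}\tau_e\,D_ez_h
\le \Big(\sum_{e}|\tau_e|\Big)\max_e|D_ez_h| .
\]
Lemma~\ref{lem:consistency} bounds the sum by $C$, and \eqref{eq:dualstab} bounds the maximum by $Ch\ell_h\|e_h\|_h$. Dividing by $\|e_h\|_h$ gives
\[
\|e_h\|_h\le Ch\ell_h ,
\]
that is, $Ch(1+|\log h|)^{1/2}$ for $d=2$ and $Ch^{1/2}$ for $d=3$. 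For $h\le h_0<1$ we have $1+|\log h|\le C|\log h|$, so the 2-D form matches \eqref{equ:ResultPois2D}. This step is essentially mechanical given the three lemmas; the real work is hidden in Lemma~\ref{lem:dual}, which is the $L^\infty$ bound for discrete gradients of the dual solution.

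\textbf{Step 2: projection error.} Write $u=g+w$, with $g$ the singular part and $w$ smooth on $\overline\Omega$ (after odd reflection, as in Lemma~\ref{lem:consistency}). Then $\|w-P_hw\|\le Ch$, so it suffices to bound $\|g-P_hg\|$. We split into the cells $K$ with $\operatorname{dist}(K,\xx_0)<2h$ (boundedly many) and the rest.

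\emph{Near cells.} Use the scaling argument of Subsections~\ref{subsec:Pois2D}--\ref{subsec:Pois3D}: $\|g-\bar g_K\|_{L^2(K)}\le\|g-c\|_{L^2(K)}$ for any constant $c$. In 2-D choose $c=-\frac1{2\pi}\log h$; the rescaled function $\log|\hat\xx|$ is in $L^2$ of a bounded set, giving $Ch$. In 3-D, $\|g\|_{L^2(B_{5h})}\le Ch^{1/2}$.

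\emph{Far cells.} On a far cell the Poincar\'e inequality gives $\|g-\bar g_K\|_{L^2(K)}\le Ch\|\nabla g\|_{L^2(K)}$. Summing,
\[
h^2\int_{h\le|\xx-\xx_0|\le C}|\xx-\xx_0|^{2-2d}\,d\xx ,
\]
which is $Ch^2|\log h|$ for $d=2$ and $Ch^2\cdot h^{-1}=Ch$ for $d=3$. This gives $\|u-P_hu\|\le Ch|\log h|^{1/2}$ and $Ch^{1/2}$ respectively, which is exactly the right order. Cells touching $\partial\Omega$ are handled by the same Poincar\'e bound, since $u$ is smooth there.

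\textbf{Main obstacle.} The main obstacle is Step 2 in 2-D: the far-field sum must be done with the gradient, not the Hessian. As the remark after the heuristic notes, a piecewise-constant approximation is only first order, so the logarithm is genuinely present. One must also make sure that the near-cell constant shift removes the $|\log h|$ growth, since otherwise one would only obtain $h|\log h|$. Combining Steps 1 and 2 via the triangle inequality yields \eqref{equ:ResultPois2D}--\eqref{equ:ResultPois3D}.
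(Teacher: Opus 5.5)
Your proposal is correct and follows the paper's proof essentially step for step. It uses the same split $u-u_h=(u-P_hu)+e_h$, the same duality argument with $\psi=e_h$ combined with Lemmas~\ref{lem:errrep}--\ref{lem:dual}, and the same Poincar\'e-plus-scaling treatment of the projection error. The only differences are cosmetic: you split off the smooth part $w$ explicitly before estimating $u-P_hu$, and you note the discrete bound via the $L^2$-contraction of $P_h$, although your Step~1 already gives $\|e_h\|_h\le Ch\ell_h$ directly.
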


\begin{proof}
We split $u-u_h = (u-P_hu) + e_h$ with $e_h = P_hu-u_h$, and note that
$h\le h\ell_h$, $h\ell_h = h^{1/2}$ for $d=3$, and $h\ell_h\le C h|\log h|^{1/2}$ for $d=2$ and $h\le h_0<1$.

\emph{Projection error.} For cells $K$ with $\operatorname{dist}(K,\xx_0)\ge h$,
the Poincar\'e inequality on $K$ gives $\|u-(P_hu)_K\|_{L^2(K)}\le Ch\|\nabla u\|_{L^2(K)}$, and summation yields
\[
\sum_{\operatorname{dist}(K,\xx_0)\ge h}\|u-(P_hu)_K\|_{L^2(K)}^2
\le C h^2\int_{\Omega\cap\{|\xx-\xx_0|\ge h\}}|\xx-\xx_0|^{2-2d}\,d\xx
\le C h^2\ell_h^2 .
\]
For each of the boundedly many remaining cells, which lie in $B_{3h}(\xx_0)$,
the scaling argument used for the first term in \eqref{eq:nearfield-split} gives
$\|u-(P_hu)_K\|_{L^2(K)}\le Ch$ for $d=2$, and
$\|u-(P_hu)_K\|_{L^2(K)}\le\|u\|_{L^2(K)}\le C\big\||\xx-\xx_0|^{-1}\big\|_{L^2(B_{3h}(\xx_0))}+Ch^{3/2}\le Ch^{1/2}$ for $d=3$.
Hence $\|u-P_hu\|_{L^2(\Omega)}\le Ch\ell_h$.

\emph{Discrete error.} Let $z_h$ be the solution of the dual problem of Lemma~\ref{lem:dual} with $\psi=e_h$.
By Lemmas~\ref{lem:errrep}, \ref{lem:consistency} and~\ref{lem:dual},
\[
\|e_h\|_h^2 = a_h(e_h,z_h) = \sum_{e\in\mathcal{E}}\tau_e\,D_e z_h
\le \Big(\sum_{e\in\mathcal{E}}|\tau_e|\Big)\max_{e\in\mathcal{E}}|D_e z_h|
\le C\,h\,\ell_h\,\|e_h\|_h .
\]
Hence $\|P_hu-u_h\|_h=\|e_h\|_h \le C h\ell_h$, and the triangle inequality completes the proof.
\end{proof}

\begin{remark}[Error on the singular cell]\label{rem:nearfield}
Since $|K_0|^{1/2}|\ou_{K_0}-u_{K_0}| = \|e_h\|_{L^2(K_0)}\le\|e_h\|_h$,
the proof yields $|K_0|^{1/2}|\ou_{K_0}-u_{K_0}|\le Ch\ell_h$,
which is the bound for the second term in \eqref{eq:nearfield-split}
used in Subsections~\ref{subsec:Pois2D} and~\ref{subsec:Pois3D}.
For $d=2$, this means $|\ou_{K_0}-u_{K_0}|\le C|\log h|^{1/2}$, whereas $\ou_{K_0}=\frac{1}{2\pi}|\log h|+O(1)$:
the discrete solution reproduces the logarithmic growth of $u$ in the singular cell to leading order.
For $d=3$, the error on $K_0$ is bounded from below by $\|u-\ou_{K_0}\|_{L^2(K_0)}\ge c\,h^{1/2}$,
so that the rate in \eqref{equ:ResultPois3D} cannot be improved.
\end{remark}

\begin{remark}[Mechanism of the order reduction]\label{rem:mechanism}
The proof makes the propagation of local errors explicit.
By Lemma~\ref{lem:consistency}, the far-field flux errors are summable,
while the finitely many faces near $\xx_0$ carry flux errors of order one, which do not decrease under mesh refinement.
Through \eqref{eq:errrep}, these errors are propagated by the discrete dual solution, i.e., by the discrete Green's function,
and they enter the global error multiplied by the discrete gradients $D_ez_h$ near $\xx_0$.
The size $h\ell_h$ of these gradients reflects that the dual solution is only in $H^2$,
which does not embed into $W^{1,\infty}$ for $d=2,3$.
This is the mechanism that reduces the convergence order from the nominal value $2$ to $2-d/2$
(up to a logarithmic factor for $d=2$), in analogy with the finite element result of Scott \cite{Scott1973},
who proved the $L^2$ error bound $O(h)$ for piecewise linear finite elements in two dimensions.
\end{remark}

\begin{remark}[Extensions]\label{rem:extensions}
Assumption~(A) enters only through the summability of the flux errors and the dual stability \eqref{eq:dualstab}.
On general quasi-uniform admissible meshes, the two-point flux approximation is in general only first-order consistent
\cite{EymardGallouetHerbin2000}, so that the far-field flux errors are of order $h^d\operatorname{dist}(e,\xx_0)^{-d}$
and their sum grows like $|\log h|$; provided a dual stability estimate like \eqref{eq:dualstab} is available,
the argument then yields the same rates up to an additional logarithmic factor.
Moreover, if the dual problem with data $\psi\in L^\infty(\Omega)$ satisfies the discrete $W^{1,\infty}$ estimate
$\max_e|D_ez_h|\le Ch\|\psi\|_{L^\infty(\Omega)}$, which for the five- and seven-point schemes
is a consequence of discrete $W^{2,p}$-regularity with $p>d$,
the same duality argument in $L^1$ gives $\|P_hu-u_h\|_{L^1(\Omega)}\le Ch$.
Since $\|u-P_hu\|_{L^1(\Omega)}\le Ch\|\nabla u\|_{L^1(\Omega)}$ with $\nabla u\in L^1(\Omega)$,
this implies $\|u-u_h\|_{L^1(\Omega)}\le Ch$ for $d=2,3$.
\end{remark}
\end{revblock}

\section{Finite Volume Error for a Heat Equation with a Space-Time Dirac Source}
\label{sec:HeatErrAnalSpaceTime}

We consider the heat equation
\begin{equation}
u_t-\Delta u=\delta_{(x_0,t_0)}
\qquad \text{in } \Omega \times (0,T)
\label{eq:heat-dirac}
\end{equation}
with homogeneous Dirichlet boundary conditions
\[
u=0 \qquad \text{on } \partial\Omega \times (0,T),
\]
and vanishing initial condition
\[
u(x,0)=0.
\]

The right-hand side is a Dirac mass concentrated at the space-time point $(x_0,t_0)$.

\subsection{Exact solution in the whole space}

In $\mathbb{R}^d$, the fundamental solution of the heat equation is the heat kernel
\begin{equation}
G(\xx,t)=
(4\pi t)^{-d/2}\exp\!\left(-\frac{|\xx|^2}{4t}\right),
\qquad t>0.
\end{equation}
Therefore, in the whole space, the solution of \eqref{eq:heat-dirac} is
\begin{equation}
u(\xx,t)=
\begin{cases}
0, & t<t_0,\\[4pt]
G(\xx-\xx_0,t-t_0), & t>t_0.
\end{cases}
\label{eq:heat-kernel-shifted}
\end{equation}

Thus, if we set
\[
s:=t-t_0>0,
\]
then
\begin{equation}
u(\xx,t)
=
(4\pi s)^{-d/2}
\exp\!\left(-\frac{|\xx-\xx_0|^2}{4s}\right).
\end{equation}

This shows that the singularity is instantaneous in time: at $t=t_0$ the solution is measure-valued, but for every fixed $t>t_0$ the solution is smooth in space.

\subsection{Scaling of the solution}

The heat kernel has the self-similar form
\begin{equation}
u(\xx,t)=s^{-d/2}\,\Phi\!\left(\frac{\xx-\xx_0}{\sqrt{s}}\right),
\qquad
\Phi(\yy):=(4\pi)^{-d/2}e^{-|\yy|^2/4}.
\label{eq:selfsimilar}
\end{equation}

Differentiating with respect to $\xx$ gives
\begin{equation}
\nabla^m u(\xx,t)
=
s^{-d/2}s^{-m/2}
\Psi_m\!\left(\frac{\xx-\xx_0}{\sqrt{s}}\right),
\end{equation}
where $\Psi_m$ is a polynomial times a Gaussian \rev{distribution function.} Therefore
\begin{equation}
|\nabla^m u(\xx,t)|
\sim
s^{-d/2-m/2}
\times
\text{Gaussian factor}.
\end{equation}

Taking the $L^2$ norm and using the change of variables
\[
\yy=\frac{\xx-\xx_0}{\sqrt{s}},
\qquad d\xx=s^{d/2}\,d\yy,
\]
we obtain
\begin{align}
\|\nabla^m u(\cdot,t)\|_{L^2(\mathbb{R}^d)}^2
&\sim
\int_{\mathbb{R}^d}
s^{-d-m}
\left|\Psi_m\!\left(\frac{\xx-\xx_0}{\sqrt{s}}\right)\right|^2
d\xx
\\
&=
s^{-d-m}s^{d/2}
\int_{\mathbb{R}^d} |\Psi_m(\yy)|^2\,d\yy
\\
&\sim
s^{-m-d/2}.
\end{align}
Hence
\begin{equation}
\|\nabla^m u(\cdot,t)\|_{L^2(\mathbb{R}^d)}
\sim
s^{-m/2-d/4}.
\label{eq:derivative-scaling}
\end{equation}

In particular, for $m=2$,
\begin{equation}
|u(\cdot,t)|_{H^2}
\sim
s^{-1-d/4}.
\label{eq:h2-scaling}
\end{equation}

\subsection{Finite volume approximation}

Let $u_h(\cdot,t)$ be \rev{the semidiscrete approximation by the two-point flux finite volume method
of Section~\ref{sec:num_method}, i.e., $u_h'-\Delta_hu_h=0$ for $t>t_0$ with $u_h(t_0)=\mu_h$,}
on a quasi-uniform mesh of size $h$.
\rev{As explained in Subsection~\ref{subsec:FVnotation}, \emph{low-order} refers to this lowest-order scheme
with one unknown per control volume, piecewise constant approximations, and two-point fluxes without reconstruction.
For smooth solutions on the uniform meshes considered here, it is second-order accurate in the discrete $L^2$ norm
and first-order accurate in the discrete energy norm $\|\cdot\|_{\mathrm{energy}}=|\cdot|_{1,h}$,
with the $L^2$ error measured in the discrete sense of Subsection~\ref{subsec:FVnotation}.}

\rev{Accordingly}, for smooth functions, standard \rev{spatial error} estimates \rev{for this scheme} have the form
\begin{equation}
\|u(\cdot,t)-u_h(\cdot,t)\|_{L^2(\Omega)}
\lesssim
h^2\,|u(\cdot,t)|_{H^2(\Omega)},
\label{eq:l2-fv-smooth}
\end{equation}
and, in a discrete energy norm,
\begin{equation}
\|u(\cdot,t)-u_h(\cdot,t)\|_{\mathrm{energy}}
\lesssim
h\,|u(\cdot,t)|_{H^2(\Omega)}.
\label{eq:energy-fv-smooth}
\end{equation}

The point is that for every fixed $t>t_0$, the heat kernel is smooth, so these estimates apply. The only special feature is that the $H^2$ norm depends singularly on
\[
s=t-t_0.
\]

Substituting \eqref{eq:h2-scaling} into \eqref{eq:l2-fv-smooth} and \eqref{eq:energy-fv-smooth}, we obtain
\begin{equation}
\|u(\cdot,t)-u_h(\cdot,t)\|_{L^2(\Omega)}
\lesssim
h^2\,s^{-1-d/4},
\qquad s=t-t_0>0,
\label{eq:l2-final-general}
\end{equation}
and
\begin{equation}
\|u(\cdot,t)-u_h(\cdot,t)\|_{\mathrm{energy}}
\lesssim
h\,s^{-1-d/4}.
\label{eq:energy-final-general}
\end{equation}

\subsection{Explicit rates in two dimensions}

If $d=2$, then \eqref{eq:h2-scaling} becomes
\begin{equation}
|u(\cdot,t)|_{H^2}\sim s^{-3/2}.
\end{equation}
Therefore
\begin{equation}
\|u(\cdot,t)-u_h(\cdot,t)\|_{L^2(\Omega)}
\lesssim
h^2\,s^{-3/2},
\qquad s=t-t_0>0,
\label{eq:l2-2d-heat}
\end{equation}
and
\begin{equation}
\|u(\cdot,t)-u_h(\cdot,t)\|_{\mathrm{energy}}
\lesssim
h\,s^{-3/2}.
\label{eq:energy-2d-heat}
\end{equation}

Thus, in two dimensions, for every fixed positive time after the impulse, the spatial finite volume method recovers the standard rates:
\[
O(h^2)\quad\text{in }L^2,
\qquad
O(h)\quad\text{in energy norm}.
\]

\subsection{Explicit rates in three dimensions}

If $d=3$, then \eqref{eq:h2-scaling} becomes
\begin{equation}
|u(\cdot,t)|_{H^2}\sim s^{-7/4}.
\end{equation}
Therefore
\begin{equation} \label{equ:ResultHeatSpaceTime2D} 
\|u(\cdot,t)-u_h(\cdot,t)\|_{L^2(\Omega)}
\lesssim
h^2\,s^{-7/4},
\qquad s=t-t_0>0,
\end{equation}
and
\begin{equation}
\|u(\cdot,t)-u_h(\cdot,t)\|_{\mathrm{energy}}
\lesssim
h\,s^{-7/4}.
\label{eq:energy-3d-heat}
\end{equation}

For the heat equation
\[
u_t-\Delta u=\delta_{(\xx_0,\rev{t_0})},
\]
the low-order \rev{(two-point flux)} finite volume method satisfies, for every fixed $t>t_0$,
\begin{equation} \label{equ:ResultHeatSpaceTime3D}
\|u(\cdot,t)-u_h(\cdot,t)\|_{L^2(\Omega)}
\lesssim
\begin{cases}
h^2 (t-t_0)^{-3/2}, & d=2,\\[4pt]
h^2 (t-t_0)^{-7/4}, & d=3,
\end{cases}
\end{equation}
and
\begin{equation}
\|u(\cdot,t)-u_h(\cdot,t)\|_{\mathrm{energy}}
\lesssim
\begin{cases}
h (t-t_0)^{-3/2}, & d=2,\\[4pt]
h (t-t_0)^{-7/4}, & d=3.
\end{cases}
\end{equation}

\section{Finite Volume Error for a Heat Equation with a Persistent Dirac Source}
\label{sec:HeatErrAnalPersistent}

\begin{proposition}
Let $\Omega\subset\mathbb{R}^d$ ($d\ge2$) be a bounded domain and $\xx_0\in\Omega$.
Consider
\begin{equation}
\partial_t u - \Delta u = \delta_{\xx_0}\,\mathbf{1}_{t\ge t_0}
\quad \text{in } \Omega \times (0,T)
\qquad
u=0 \text{ on } \partial\Omega,
\qquad
u(0)=0.
\end{equation}

Let $\mathcal{T}_h$ be a quasi-uniform admissible finite volume mesh and let $K_0\in\mathcal{T}_h$ be the control volume containing $\xx_0$\rev{;
for the estimate of the discretization error (Step~5 of the proof), we assume that $\Omega$, $\mathcal{T}_h$, and $\xx_0$
satisfy Assumption~(A) of Subsection~\ref{subsec:PoisRigorous}}. Define the regularized source
\[
\mu_h := \frac{1}{|K_0|}\mathbf{1}_{K_0}.
\]

Let $u^{(h)}$ solve
\begin{equation}
\partial_t u^{(h)} - \Delta u^{(h)} = \mu_h\,\mathbf{1}_{t\ge t_0},
\qquad
u^{(h)}|_{\partial\Omega}=0,\qquad u^{(h)}(0)=0,
\end{equation}
and let $u_h(t)$ be the semidiscrete finite volume approximation\rev{,
$u_h'-\Delta_hu_h=\mu_h\mathbf{1}_{t\ge t_0}$, $u_h(0)=0$, see Subsection~\ref{subsec:FVnotation}}.

Then for every $\tau>0$ there exists $C>0$ such that for all $t\in[t_0+\tau,T]$,
\begin{align} \label{equ:ResultL1L2}
\|u(t)-u_h(t)\|_{L^1(\Omega)} &\le C h,\\
\|u(t)-u_h(t)\|_{L^2(\Omega)} &\le  \label{equ:ResultHeatPersistentSemi}
\begin{cases}
C h |\log h|^{1/2}, & d=2,\\
C h^{1/2}, & d=3.
\end{cases}
\end{align}

Moreover, for any compact $K\Subset \Omega\setminus\{\xx_0\}$,
\[
\|u(t)-u_h(t)\|_{L^p(K)} \le C (h^r),
\]
where $r$ is the order of the underlying FV scheme for smooth solutions.
\end{proposition}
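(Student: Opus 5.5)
The plan is to split the error at time $t$ through the intermediate function $u^{(h)}$:
\[
u(t)-u_h(t) = \big(u(t)-u^{(h)}(t)\big) + \big(u^{(h)}(t)-u_h(t)\big),
\]
that is, a regularization error plus a discretization error. For the regularization error, Duhamel's principle gives
\[
u(t)-u^{(h)}(t)=\int_{t_0}^{t}\Big(G(\cdot,\xx_0,t-s)-\frac{1}{|K_0|}\int_{K_0}G(\cdot,\yy,t-s)\,d\yy\Big)ds ,
\]
where $G$ is the Dirichlet heat kernel. In this representation $\delta_{\xx_0}-\mu_h$ has mass zero and support of diameter $\lesssim h$. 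Testing against $\psi\in L^\infty$ (for $L^1$) or $\psi\in L^2$ (for $L^2$) and passing to the adjoint problem turns the error into $\int_{t_0}^t\big(v(\xx_0,s)-\overline{v}_{K_0}(s)\big)\,ds$, where $v$ solves the backward heat equation with data $\psi$.
\begin{itemize}
\item For $L^1$: a Lipschitz bound $\|\nabla v(s)\|_{L^\infty}\lesssim (t-s)^{-1/2}\|\psi\|_\infty$ is integrable in $s$, so this error is $O(h)$.
\item For $L^2$: use instead the pointwise H\"older/gradient bound of $v$ near $\xx_0$ coming from $H^2$-regularity of the integrated (in time) adjoint, $w=\int v\,ds$. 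We have $\|w\|_{H^2}\lesssim\|\psi\|_{L^2}$ by the elliptic estimate for the time-integrated problem. The $H^2\hookrightarrow C^{0,\alpha}$ modulus, namely $|w(\xx_0)-\overline w_{K_0}|\lesssim h|\log h|^{1/2}$ for $d=2$ (Bramble-type discrete/continuous Sobolev) and $h^{1/2}$ for $d=3$, then gives exactly the rates in \eqref{equ:ResultHeatPersistentSemi}.
\end{itemize}
Equivalently, I would reduce to the elliptic case: for $t\ge t_0+\tau$ the solution equals the stationary Poisson solution of \eqref{poisson} minus a part that is smooth in space.

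For the discretization error $u^{(h)}-u_h$ (Step 5, under Assumption~(A)), I will mimic Theorem~\ref{thm:Poisson}. The key structural fact, noted in Section~\ref{sec:num_method}, is that the FV scheme sees $\delta_{\xx_0}$ and $\mu_h$ identically. So it suffices to compare $u_h$ directly with $u$, using the flux-consistency framework. Write $e_h=P_hu-u_h$; then $e_h'-\Delta_h e_h$ equals the discrete divergence of the flux errors $\tau_e(t)$ of \eqref{eq:fluxerror}. I then decompose
\[
u(t)=U+\big(u(t)-U\big),
\]
where $U$ is the stationary Poisson solution (scaled by the unit source) and $u(t)-U=-\int_{t-t_0}^\infty e^{s\Delta}\delta_{\xx_0}\,ds$. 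For $t-t_0\ge\tau$ the latter is smooth, with all spatial derivatives bounded by $C(\tau)$.
\begin{itemize}
\item The stationary part $U$ is exactly the Poisson problem, handled by Theorem~\ref{thm:Poisson} together with Remark~\ref{rem:extensions} for the $L^1$ rate $Ch$.
\item For the remainder I apply a parabolic version of the duality argument. The discrete heat semigroup is a contraction in $\|\cdot\|_h$ and in $L^1/L^\infty$ by the M-matrix/maximum principle. Combined with Lemma~\ref{lem:consistency}, applied uniformly in time, this gives $O(h^2)$ for the smooth part plus an initial layer.
\end{itemize}
The initial layer, near $t=t_0$, is where the transient kernel is singular. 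The plan here is an energy estimate for $e_h$ with a time weight $(t-t_0)$, exploiting the smoothing of the discrete semigroup so that early-time errors decay by time $t_0+\tau$. Errors introduced at times $s\ll t$ are damped by $\|\Delta_h e^{(t-s)\Delta_h}\|\lesssim (t-s)^{-1}$.

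The interior estimate on compacts $K\Subset\Omega\setminus\{\xx_0\}$ follows from the far-field decay in Lemma~\ref{lem:consistency}: flux errors away from $\xx_0$ are $O(h^{r})$. The $O(1)$ errors near $\xx_0$ are propagated to $K$ by a discrete Green's function whose discrete gradient is bounded on $K\times$near-field, giving an $O(h^{d})$ contribution per face times a bounded number of faces. Together with local smoothness of $u$ away from $\xx_0$, this yields $Ch^r$, interpreting $r\le d$.

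The main obstacle I expect is the parabolic analogue of the dual stability Lemma~\ref{lem:dual}, uniformly over the time interval including the layer near $t_0$. Concretely, I need $\int_{t_0}^t\max_e|D_ez_h(s)|\,ds\le Ch\ell_h\|\psi\|_h$ for the backward discrete heat problem. I would attempt this by writing the time integral of $z_h$ as $(-\Delta_h)^{-1}(I-e^{(t-t_0)\Delta_h})\psi$ and noting that $I-e^{(t-t_0)\Delta_h}$ is a contraction commuting with the Kronecker structure. This reduces the estimate exactly to Lemma~\ref{lem:dual}.
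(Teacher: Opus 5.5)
Your regularization step is sound and takes a different route from the paper. The paper bounds $\eta_h$ pointwise by $h|\xx-\xx_0|^{1-d}$ for $|\xx-\xx_0|\ge 2h$, integrates in polar coordinates, and treats $B_{2h}(\xx_0)$ separately via $\eta_h=b_h-E(t-t_0)b_h$. You instead test with the integrated adjoint $w=(-\Delta)^{-1}(I-E(t-t_0))\psi$ and use the $H^2$ modulus of continuity near $\xx_0$ (Brezis--Wainger for $d=2$, $H^2\subset C^{0,1/2}$ for $d=3$). This handles near and far field at once, is the continuous analogue of Lemma~\ref{lem:dual}, and does not even need $t\ge t_0+\tau$.

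The gap is in the step you single out as the main obstacle. The identity $\int_{t_0}^t z_h\,ds=(-\Delta_h)^{-1}(I-e^{(t-t_0)\Delta_h})\psi$, together with Lemma~\ref{lem:dual}, bounds $\max_e|D_e\int_{t_0}^t z_h\,ds|$, not $\int_{t_0}^t\max_e|D_ez_h(s)|\,ds$. The maximum over faces cannot be moved inside the time integral. So the identity only helps for flux errors independent of $s$, for which $\int\sum_e\tau_eD_ez_h\,ds=\sum_e\tau_eD_e\int z_h\,ds$. The flux errors of $u(s)$ are not of this kind. The best pointwise-in-time bound, $\max_e|D_eE_h(\sigma)\psi|\lesssim h\,\sigma^{-1/2-d/4}\|\psi\|_h$ for $\sigma\ge h^2$ (sharp for each fixed $\sigma$), integrates to $h^{1/2}$ for $d=3$. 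For $d=2$ it gives $h|\log h|$, a factor $|\log h|^{1/2}$ worse than claimed. Removing that factor would need an orthogonality argument across dyadic time scales, which your reduction does not provide.

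The repair is to use your identity only where it is valid. Write the flux errors of $u(s)=U-W(s)$, $W(s)=E(s-t_0)U$, as $\tau_e^U-\tau_e^{W(s)}$. The $s$-independent part generates exactly $(I-E_h(t-t_0))(P_hU-u_{\infty,h})$, where $u_{\infty,h}$ is the finite volume Poisson solution, so Theorem~\ref{thm:Poisson} and contractivity give $Ch\ell_h$. For $\tau_e^{W(s)}$ crude bounds suffice:
\begin{itemize}
\item If $s-t_0\ge(t-t_0)/2$, then $W(s)$ is smooth and $\sum_e|\tau_e^{W(s)}|\le C(\tau)h$, which absorbs any logarithmic loss in $\int\max_e|D_ez_h|\,ds$.
\item If $s-t_0<(t-t_0)/2$, then $z_h(s)=E_h(t-s)\psi$ has been smoothed over a time at least $\tau/2$. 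Hence $\max_e|D_ez_h(s)|\le C(\tau)h\ell_h\|\psi\|_h$ by Lemma~\ref{lem:dual}, while $\sum_e|\tau_e^{W(s)}|\le C$ by a uniform-in-time version of Lemma~\ref{lem:consistency}.
\end{itemize}
This is the decomposition of the paper's Step~5. The paper treats the transient part more economically: it measures the consistency error of $E(\sigma)u_\infty$ in $\|\cdot\|_{-1,h}$, uses $|E(\sigma)u_\infty|_{H^2}\le C\sigma^{-7/8}\|u_\infty\|_{H^{1/4}}$ and $\|E_h(s)\rho\|_h\le Cs^{-1/2}\|\rho\|_{-1,h}$, and so needs neither time weights nor a separate initial-layer argument.

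Finally, your count in the local estimate is off. Near $\xx_0$, the discrete Green's function of a point in $K$ has $|D_eG_h|\sim h$, so each $O(1)$ near-field flux error contributes $O(h)$, not $O(h^d)$. Since $u_h$ does not depend on where $\xx_0$ lies in $K_0$ while $u$ does, this $O(h)$ pollution is genuinely present in general. Your argument therefore supports the local bound only with $r=1$.
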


\begin{proof}
We split the error as
\[
u - u_h = (u - u^{(h)}) + (u^{(h)} - u_h) =: \eta_h + \xi_h.
\]

\medskip
\noindent\textbf{Step 1: representation formula.}
Let $G_D(t, \xx,\yy)$ be the Dirichlet heat kernel. Then
\[
u(\xx,t) = \int_{t_0}^t G_D(t-s,\xx,\xx_0)\,ds,
\quad
u^{(h)}(\xx,t) = \int_{t_0}^t \frac{1}{|K_0|}\int_{K_0} G_D(t-s,\xx,\yy)\,d\yy\,ds.
\]
Hence
\[
\eta_h(\xx,t)
= \int_{t_0}^t \left(
G_D(t-s,\xx,\xx_0) - \frac{1}{|K_0|}\int_{K_0} G_D(t-s,\xx,\yy)\,d\yy
\right) ds.
\]

\medskip
\noindent\textbf{Step 2: pointwise estimate.}
\rev{For $|\xx-\xx_0|\ge 2h$,} by the mean value theorem in $\yy$,
\[
|\eta_h(\xx,t)|
\le C h \int_{t_0}^t \sup_{\yy\in K_0} |\nabla_\yy G_D(t-s,\xx,\yy)|\,ds.
\]
Using the classical estimate
\[
\int_0^\infty |\nabla_\yy G_D(\tau,\xx,\yy)|\,d\tau \lesssim |\xx-\xx_0|^{1-d},
\]
we obtain
\[
|\eta_h(\xx,t)| \lesssim h |\xx-\xx_0|^{1-d}\rev{,\qquad |\xx-\xx_0|\ge 2h}.
\]

\medskip
\noindent\textbf{Step 3: $L^1$ estimate.}
Integrating in polar coordinates,
\[
\|\eta_h(t)\|_{L^1(\Omega)} \lesssim h \int_h^1 r^{1-d} r^{d-1} dr \lesssim h.
\]

\medskip
\noindent\textbf{Step 4: $L^2$ estimate.}
Similarly,
\[
\|\eta_h(t)\|_{L^2(\Omega)}^2 \lesssim h^2 \int_h^1 r^{1-d} dr,
\]
which gives
\[
\|\eta_h(t)\|_{L^2} \lesssim 
\begin{cases}
h |\log h|^{1/2}, & d=2,\\
h^{1/2}, & d=3.
\end{cases}
\]
\begin{revblock}
Here the pointwise bound of Step~2 has been used for $|\xx-\xx_0|\ge 2h$ only.
In the remaining ball $B_{2h}(\xx_0)$, we use that $\int_0^\infty G_D(\sigma,\xx,\yy)\,d\sigma = G(\xx,\yy)$
is the Dirichlet Green's function of the Laplacian, so that Step~1 gives
$\eta_h(t) = b_h - E(t-t_0)b_h$ with $b_h(\xx) := G(\xx,\xx_0)-|K_0|^{-1}\int_{K_0}G(\xx,\yy)\,d\yy$,
where $E(s)$ denotes the Dirichlet heat semigroup.
For $s=t-t_0\ge\tau$, we have $\|E(s)b_h\|_{L^\infty(\Omega)}\le C(\tau)\|b_h\|_{L^1(\Omega)}\le C$,
and on $B_{2h}(\xx_0)$ we have $|b_h(\xx)|\le C(1+|\log(|\xx-\xx_0|/h)|)$ for $d=2$
(the constant $\frac{1}{2\pi}|\log h|$ cancels in $b_h$, as in \eqref{eq:nearfield-split})
and $|b_h(\xx)|\le C(|\xx-\xx_0|^{-1}+h^{-1})$ for $d=3$.
Hence the near field contributes at most $Ch$ for $d=2$ and $Ch^{1/2}$ for $d=3$ to the $L^2$ norm,
and $O(h^2)$ to the $L^1$ norm, so that the bounds of Steps~3 and~4 hold on all of $\Omega$.
\end{revblock}

\medskip
\noindent\textbf{Step 5: discretization error.}
\begin{revblock}
The function $u^{(h)}$ solves a heat equation with a bounded source, but this bound degenerates:
$\|\mu_h\|_{L^\infty(\Omega)}=|K_0|^{-1}\sim h^{-d}$, and the higher derivatives of $u^{(h)}$ blow up near $K_0$ as $h\to0$;
for instance, $\|u^{(h)}(t)\|_{H^2(\Omega)}\sim\|\mu_h\|_{L^2(\Omega)}\sim h^{-d/2}$.
Standard consistency estimates of the form $\|\xi_h(t)\|_{L^2(\Omega)}\le Ch\sup_{s\le t}\|u^{(h)}(s)\|_{H^2(\Omega)}$
would therefore only give the useless bound $O(h^{1-d/2})$.
We avoid derivatives of $u^{(h)}$ near $K_0$ altogether.
Since $\xi_h = (u-u_h)-\eta_h$ and $\eta_h$ has been estimated in Steps~3 and~4, it suffices to estimate $u-u_h$.

Let $u_\infty := G(\cdot,\xx_0)$ be the solution of the Poisson problem \eqref{poisson},
and let $u_{\infty,h}$ be its finite volume approximation, $-\Delta_h u_{\infty,h}=\mu_h$;
recall from Subsection~\ref{subsec:FVnotation} that the finite volume source vectors of $\delta_{\xx_0}$ and of $\mu_h$ coincide.
With the semigroup $E(s)$ of Step~4 and its semidiscrete counterpart $E_h(s):=e^{s\Delta_h}$,
Duhamel's formula gives for $s:=t-t_0\ge\tau$
\[
u(t) = u_\infty - E(s)u_\infty,
\qquad
u_h(t) = u_{\infty,h} - E_h(s)u_{\infty,h},
\]
and hence, with $v(t):=E(s)u_\infty$ and $\varepsilon_h(t):=P_hv(t)-E_h(s)u_{\infty,h}$,
\[
u(t)-u_h(t) = (u_\infty-u_{\infty,h}) - \big(v(t)-P_hv(t)\big) - \varepsilon_h(t).
\]

(a) The first term is the error of the stationary problem, $\|u_\infty-u_{\infty,h}\|_{L^2(\Omega)}\le Ch\ell_h$
by Theorem~\ref{thm:Poisson}. This is the only place where the singularity at $\xx_0$ enters,
and Theorem~\ref{thm:Poisson} controls it through the flux errors of order one at the finitely many faces near $\xx_0$
(Lemma~\ref{lem:consistency}) and the stability of the dual problem (Lemma~\ref{lem:dual}),
not through derivatives near $\xx_0$.

(b) For $s\ge\tau$, $v(t)$ is smooth, and $\|v(t)-P_hv(t)\|_{L^2(\Omega)}\le Ch\|\nabla v(t)\|_{L^2(\Omega)}\le C(\tau)h$.

(c) The function $\varepsilon_h$ starts from $\varepsilon_h(t_0)=P_hu_\infty-u_{\infty,h}$,
with $\|\varepsilon_h(t_0)\|_h\le Ch\ell_h$ by the proof of Theorem~\ref{thm:Poisson},
and satisfies $\varepsilon_h'-\Delta_h\varepsilon_h=\rho_h$, where, as in Lemma~\ref{lem:errrep},
$(\rho_h(t),w)_h=\sum_{e}\tau_e(t)\,D_ew$ with the flux consistency errors $\tau_e(t)$ of $v(t)$.
The standard first-order flux consistency of the two-point scheme \cite{EymardGallouetHerbin2000} gives
$\|\rho_h(t)\|_{-1,h}\le Ch\,|v(t)|_{H^2(\Omega)}$, where $\|\cdot\|_{-1,h}$ is the norm dual to $|\cdot|_{1,h}$.
Since $u_\infty\in H^{1/4}(\Omega)$ (in fact, $u_\infty\in H^\beta(\Omega)$ for all $\beta<2-d/2$),
parabolic smoothing and $H^2$-regularity on the convex domain $\Omega$ yield
$|v(t)|_{H^2(\Omega)}\le C\|\Delta v(t)\|_{L^2(\Omega)}\le C (t-t_0)^{-7/8}\|u_\infty\|_{H^{1/4}(\Omega)}$.
Using that $E_h(s)$ is a contraction in $\|\cdot\|_h$ and the discrete smoothing property
$\|E_h(s)\rho\|_h\le Cs^{-1/2}\|\rho\|_{-1,h}$, which follows from the spectral representation of the symmetric operator $\Delta_h$,
Duhamel's formula gives
\[
\|\varepsilon_h(t)\|_h
\le \|\varepsilon_h(t_0)\|_h + Ch\int_0^{s}(s-\sigma)^{-1/2}\sigma^{-7/8}\,d\sigma
\le Ch\ell_h + C(\tau)h .
\]

Combining (a)--(c) shows $\|u(t)-u_h(t)\|_{L^2(\Omega)}\le Ch\ell_h$ for $t\in[t_0+\tau,T]$, and thus
$\|\xi_h(t)\|_{L^2(\Omega)}\le\|u(t)-u_h(t)\|_{L^2(\Omega)}+\|\eta_h(t)\|_{L^2(\Omega)}\le Ch\ell_h$,
which is the asserted order.
For the $L^1$ bound, the stationary errors in (a) and in $\varepsilon_h(t_0)$ are estimated in $L^1(\Omega)$
as in Remark~\ref{rem:extensions}, the propagation of $\varepsilon_h(t_0)$ is controlled since $E_h(s)$ is a contraction
in the discrete $L^1$ norm (the matrix of $-\Delta_h$ is an M-matrix, and the scheme is conservative),
and the remaining terms are bounded by $Ch$ in $L^2(\Omega)$ and hence in $L^1(\Omega)$.
\end{revblock}
Standard finite volume consistency yields optimal rates on compact subsets $K\Subset \Omega\setminus\{\xx_0\}$\rev{, which gives the local estimate}.

\medskip
Combining both terms gives the result.
\end{proof}

\begin{proposition}[Fully discrete FV error with backward Euler]
Under the assumptions of the previous proposition, consider time steps $\Delta t>0$ and $t_n = n\Delta t$.

Let $u_h^n$ be defined by the backward Euler finite volume scheme
\begin{equation}
\frac{u_h^n - u_h^{n-1}}{\Delta t} - \Delta_h u_h^n = f_h^n,
\end{equation}
with
\[
f_h^n =
\begin{cases}
0, & t_n < t_0,\\
\mu_h, & t_n \ge t_0.
\end{cases}
\]

Then for every $\tau>0$ there exists $C>0$ such that for all $t_n\in[t_0+\tau,T]$,
\begin{align}
\|u(t_n)-u_h^n\|_{L^1(\Omega)} &\le C (h + \Delta t),\\
\|u(t_n)-u_h^n\|_{L^2(\Omega)} &\le  \label{equ:ResultHeatPersistent}
\begin{cases}
C (h |\log h|^{1/2} + \Delta t), & d=2,\\
C (h^{1/2} + \Delta t), & d=3.
\end{cases}
\end{align}

Moreover, for any $K\Subset \Omega\setminus\{\xx_0\}$,
\[
\|u(t_n)-u_h^n\|_{L^p(K)} \le C (h^r + \Delta t),
\]
where $r$ is the spatial order of the FV scheme for smooth solutions.
\end{proposition}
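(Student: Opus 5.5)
We split the fully discrete error at $t_n$ into the semidiscrete error and the time-discretization error,
\[
u(t_n)-u_h^n = \big(u(t_n)-u_h(t_n)\big) + \big(u_h(t_n)-u_h^n\big),
\]
where $u_h$ is the semidiscrete finite volume solution of the previous proposition. That proposition bounds the first term in $L^1$, $L^2$ and $L^p(K)$ by the asserted spatial rates for $t_n\in[t_0+\tau,T]$. It therefore remains to prove $\|u_h(t_n)-u_h^n\|\le C\Delta t$ in the discrete $L^2$ and $L^1$ norms, uniformly in $h$.

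For the temporal term we use the same stationary-plus-transient decomposition as in Step~5. Let $u_{\infty,h}$ solve $-\Delta_h u_{\infty,h}=\mu_h$. For the first grid index $n_0$ with $t_{n_0}\ge t_0$, write $u_h^n = u_{\infty,h} - r(\Delta t\,\Delta_h)^{\,n-n_0+1}u_{\infty,h}$ for $n\ge n_0$, with $r(z)=(1-z)^{-1}$. The semidiscrete solution is $u_h(t)=u_{\infty,h}-E_h(t-t_0)u_{\infty,h}$. The stationary parts cancel exactly, so the difference is
\[
E_h(s)u_{\infty,h}-r(\Delta t\Delta_h)^{m}u_{\infty,h},
\qquad s=t_n-t_0,\; m=n-n_0+1 .
\]
Note that $m\Delta t$ differs from $s$ by at most $\Delta t$, because $t_0$ need not be a grid point. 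This offset contributes $\|(E_h(s)-E_h(m\Delta t))u_{\infty,h}\|_h\le C\Delta t\,\sup_{\sigma\ge\tau/2}\|\Delta_hE_h(\sigma)u_{\infty,h}\|_h$. The remaining term is the classical backward Euler error for the homogeneous problem with nonsmooth data.

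The standard spectral estimate $\sup_{\lambda\ge0}|e^{-m\Delta t\lambda}-(1+\Delta t\lambda)^{-m}|\le C/m$, applied through the eigenbasis of the symmetric operator $-\Delta_h$, gives
\[
\|E_h(m\Delta t)v-r(\Delta t\Delta_h)^m v\|_h\le C\frac{\Delta t}{t_m}\|v\|_h .
\]
Here $t_m=m\Delta t\ge\tau$, so this is $\le C(\tau)\Delta t\,\|u_{\infty,h}\|_h$. The main obstacle is that $\|u_{\infty,h}\|_h$ must be bounded uniformly in $h$. In $d=3$ this holds because $u_\infty\in L^2$ and $\|u_{\infty,h}\|_h\le\|P_hu_\infty\|_h+Ch^{1/2}$ by Theorem~\ref{thm:Poisson}. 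In $d=2$ it holds for the same reason. The offset term is handled with $\|\Delta_hE_h(\sigma)\|\le C\sigma^{-1}$, again using this uniform bound.

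For the $L^1$ statement we have two options. The first is to bound the discrete $L^1$ norm by $|\Omega|^{1/2}$ times the $L^2$ norm, which suffices since $\Omega$ is bounded. The second is to use that $r(\Delta t\Delta_h)$ and $E_h$ are $L^1$ contractions, because the matrix is an M-matrix and the scheme is conservative. On compact $K\Subset\Omega\setminus\{\xx_0\}$, the temporal bound $C\Delta t$ transfers to $L^p(K)$ by the finite-dimensional equivalence on the mesh portion. If that equivalence is not uniform in $h$, we would instead repeat the spectral argument after smoothing: for $s\ge\tau$ the vector $E_h(\tau/2)u_{\infty,h}$ is bounded in discrete $L^\infty$ via the discrete smoothing property. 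Combining with the semidiscrete proposition yields the stated bounds.
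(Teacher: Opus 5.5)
Your proposal is correct and is essentially the paper's proof. The temporal term is handled exactly as in the paper's Step~3: via the representations $u_h(t)=u_{\infty,h}-E_h(t-t_0)u_{\infty,h}$ and $u_h^n=u_{\infty,h}-(I-\Delta t\,\Delta_h)^{-m}u_{\infty,h}$, the nonsmooth-data bound $\sup_{\lambda\ge0}|(1+\lambda)^{-m}-e^{-m\lambda}|\le C/m$, a one-step shift of the switching-on time, and the $h$-uniform bound on $\|u_{\infty,h}\|_h$ from Theorem~\ref{thm:Poisson}. Your two-term split simply merges the paper's source-regularization and spatial terms into the semidiscrete error already bounded by the previous proposition.

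The only loose end is $L^p(K)$ with $p>2$, which the paper does not treat either. Knowing that $E_h(\tau/2)u_{\infty,h}$ is bounded in $L^\infty$ does not by itself turn your $\ell^2$ spectral estimate into an $L^\infty$ error bound; the smoothing must act on the error itself. One way is to combine $\max_K|v_K|\le C\|\Delta_h v\|_h$ for $d\le3$ with $\sup_{\lambda\ge0}\lambda\,|e^{-m\Delta t\lambda}-(1+\Delta t\lambda)^{-m}|\le C\,\Delta t\,t_m^{-2}$.
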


\begin{proof}
Let $U_h(t)$ denote the semidiscrete solution. Then
\[
u(t_n)-u_h^n
=
(u-u^{(h)})(t_n)
+
(u^{(h)}(t_n)-U_h(t_n))
+
(U_h(t_n)-u_h^n).
\]

\medskip
\noindent\textbf{Step 1: source regularization.}
From the semidiscrete analysis,
\[
\|u(t_n)-u^{(h)}(t_n)\|_{L^1} \lesssim h,
\quad
\|u(t_n)-u^{(h)}(t_n)\|_{L^2}
\lesssim
\begin{cases}
h |\log h|^{1/2}, & d=2,\\
h^{1/2}, & d=3.
\end{cases}
\]

\medskip
\noindent\textbf{Step 2: spatial discretization.}
The term $u^{(h)}-U_h$ is estimated as in the semidiscrete case and does not exceed the same order.

\medskip
\noindent\textbf{Step 3: time discretization.}
Since $U_h$ solves a linear parabolic problem with \rev{a source that is constant for $t\ge t_0$}, standard backward Euler estimates give
\[
\|U_h(t_n)-u_h^n\| \le C \Delta t,
\]
uniformly for $t_n \ge t_0+\tau$\rev{, with $C$ independent of $h$.
Indeed, as in Step~5 of the previous proof, $U_h(t)=u_{\infty,h}-E_h(t-t_0)u_{\infty,h}$, and
$u_h^n = u_{\infty,h}-r(-\Delta t\,\Delta_h)^{k}u_{\infty,h}$ with $r(\lambda)=(1+\lambda)^{-1}$,
where $k$ is the number of time steps since the source has been switched on.
The nonsmooth-data estimate $\sup_{\lambda\ge0}|r(\lambda)^k-e^{-k\lambda}|\le C/k$ for the backward Euler method \cite{Thomee2006}
gives $\|U_h(t_n)-u_h^n\|_h\le C\,\Delta t\,(t_n-t_0)^{-1}\|u_{\infty,h}\|_h$,
up to a shift of the switching-on time by at most one step, which causes an additional error of the same order.
Since $\|u_{\infty,h}\|_h\le\|P_hu_\infty\|_h+\|P_hu_\infty-u_{\infty,h}\|_h\le C$ uniformly in $h$ by Theorem~\ref{thm:Poisson},
the factor $h^{-d}$ in $\mu_h$ does not enter the constant}.
\end{proof}



\section{Stationary and Time-Dependent Numerical Tests} \label{sec:num_results}

Analogous to the analysis, we present numerical demonstrations
for a stationary test case in Section~\ref{subsec:stationary}
and a time-dependent test case in Section~\ref{subsec:transient}.


\subsection{Stationary diffusion-reaction equation with homogeneous Dirichlet boundary conditions}
\label{subsec:stationary}

The stationary measure-valued test problem is given by
the diffusion-reaction portion of the time-dependent problem \eqref{eq:DREQ} 
\begin{align}
    - \nabla \cdot (D \, \nabla u) &= \kappa \, \delta_{\0}\!(\xx)
        &&\text{ for $\xx \in \Omega$,}              \\
    u(\xx) &= 0
        &&\text{ for $\xx \in \partial\Omega$,}
\end{align}
on the domain $\Omega = (-50,50)^d \subset \R^d$ in $d = 2, 3$ spatial dimensions.
\rev{This problem models a pollutant that diffuses with diffusivity $D = 40$ and is injected into the domain at a single location.}
The injection is modeled as a point source by the measure-valued
Dirac delta distribution $\delta_{\0}\!(\xx)$
located at the center $\0$ of the domain $\Omega$
with constant injection rate $\kappa = 10$.
Analogous to the analysis, the boundary condition is a homogeneous Dirichlet condition.

Here we present convergence studies for this test case.
For smooth forcing functions on the right-hand side of the stationary problem with an analytical solution $u$,
the expected second-order convergence \emph{uniformly for all spatial dimensions}
of the numerical solution $u_h$ on a discretization
with mesh size $h$ is verified by computing the true error $u - u_h$;
\begin{revblock}
the results are not included here since they are not novel.
They do serve to confirm that the formally first-order accurate boundary discretization
does not pollute the second-order convergence overall.
\end{revblock}
For problems, whose true solution $u$ is not available, it is customary
to use the numerical solution on the finest available mesh as a reference solution $u_{ref}$ in place of $u$ in the norm of the errors.
This procedure is necessary for this non-smooth test problem.
The error norm is computed in Matlab by interpolating the numerical solution on the coarser mesh
to the fine mesh of the reference solution before computing their difference.

Table~\ref{tab:PoissonDiracDirichlet} shows, for 2-D and 3-D,
the errors against the reference solution for several uniform refinement levels $r = 0, 1, 2, \ldots$.
In parentheses are listed the convergence order $p_{est}$ estimated as logarithm of the
ratio between the error norms of consecutive refinement levels
\begin{equation}
\label{equ:pest}
p_{est} = \log_2\left(\frac{\norm{u_{r-1} - u_{ref}}}{\norm{u_{r} - u_{ref}}} \right),
\quad r = 1, 2, \ldots .
\end{equation}
The result (\ref{equ:ResultPois2D}) in Section~\ref{sec:PoisErrAnal} leads us to expect
$O(h |\log h|^{\rev{1/2}}) \to O(h)$ as $h \to 0$ in  2-D and
(\ref{equ:ResultPois3D}) to expect $O(h^{1/2}) = O(h^{0.5})$ in 3-D.

We also show the convergence order estimate in the log-log-plot for 2-D and 3-D
in Figure~\ref{fig:PoissonDiracDirichlet}
as a linear fit of $\log{\|u-u_h\|}$ against $\log{1/h}$,
whose slope estimates the convergence order.

We observe in both Table~\ref{tab:PoissonDiracDirichlet} and Figure~\ref{fig:PoissonDiracDirichlet} that
the numerical convergence order is approaching $p = 1$ for 2-D while close to $p = 0.5$ in 3-D,
thus the values listed in the table and the trends shown in the figure are consistent with the
results presented in  Section~\ref{sec:PoisErrAnal} for 2-D and 3-D.
\begin{revblock}
Since the reference solution $u_{ref}$ is itself a
numerical solution with an error of the same structure, as $r$ approaches the reference level,
$u$ and $u_{ref}$ become increasingly correlated,
so $\|u-u_{ref}\|$ underestimates the true error and
the estimated order appears too large. The orders should therefore be judged from the coarser
levels.
\end{revblock}

%

\begin{table} \centering
  \caption{Stationary diffusion-reaction equation with homogeneous Dirichlet boundary conditions and Dirac source term:
           observed $L^2$-error against reference solution (estimated convergence order) vs.\ spatial mesh refinement
           in 2-D and 3-D.}
  \label{tab:PoissonDiracDirichlet}
  \vspace{0.5\baselineskip}
\setlength{\tabcolsep}{3pt}
\begin{tabular}{r rl rl}
\hline
& \multicolumn{2}{c}{2-D} & \multicolumn{2}{c}{3-D} \\
\hline
$r$ & $\|u-u_{ref}\|$ & ($p_{est}$) & $\|u-u_{ref}\|$ & ($p_{est}$) \\
\hline
        0 &    9.322030e-01 &         &   1.430697e-01 &         \\
        1 &    4.799837e-01 &(0.9577) &   1.004787e-01 &(0.5098) \\
        2 &    2.430508e-01 &(0.9817) &   7.117283e-02 &(0.4975) \\
        3 &    1.205341e-01 &(1.0118) &   5.086844e-02 &(0.4846) \\
        4 &    5.801228e-02 &(1.0550) &   3.618990e-02 &(0.4912) \\
        5 &    2.649574e-02 &(1.1306) &   2.373690e-02 &(0.6085) \\
        6 &    1.061522e-02 &(1.3196) &     ---        & ---     \\
\hline
\end{tabular}
\end{table}
\begin{figure} 
    \centering
    \begin{tabular}{cc}
         \includegraphics[width=0.45\linewidth]{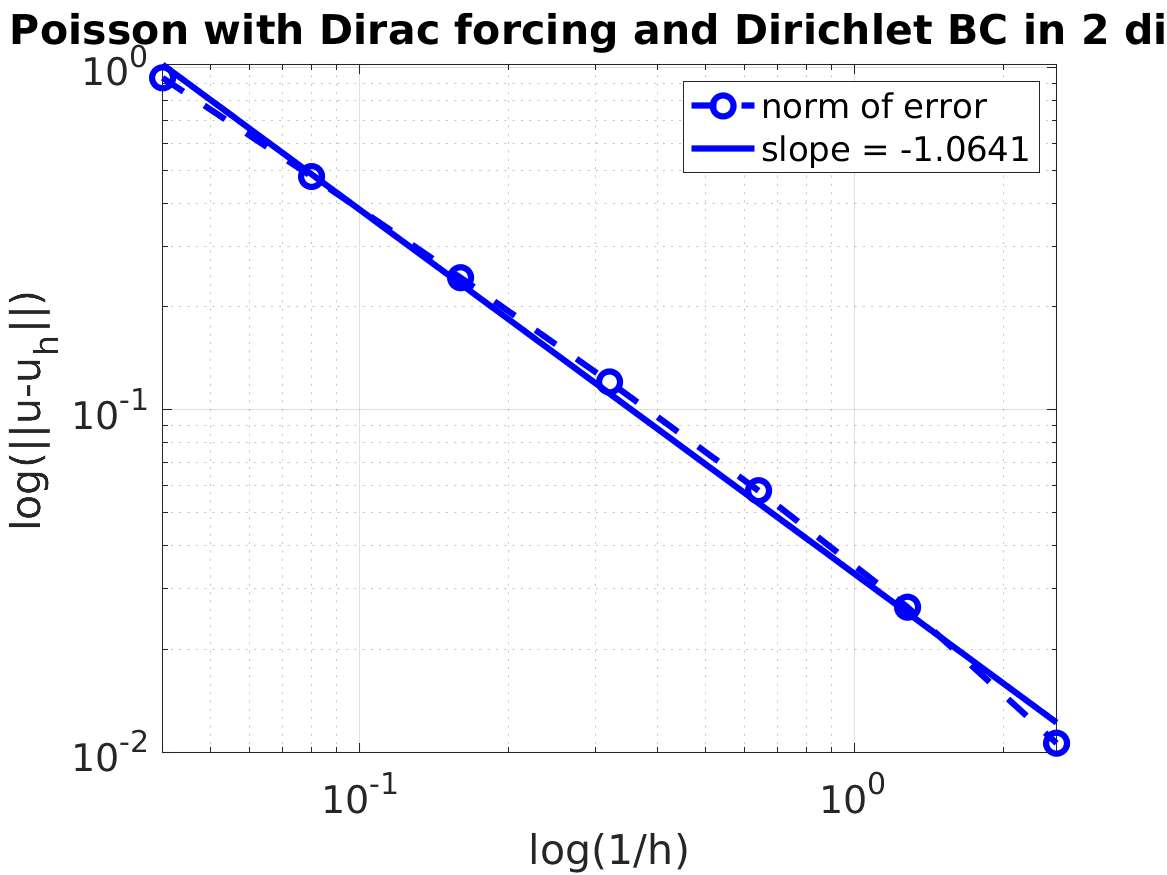} &
         \includegraphics[width=0.45\linewidth]{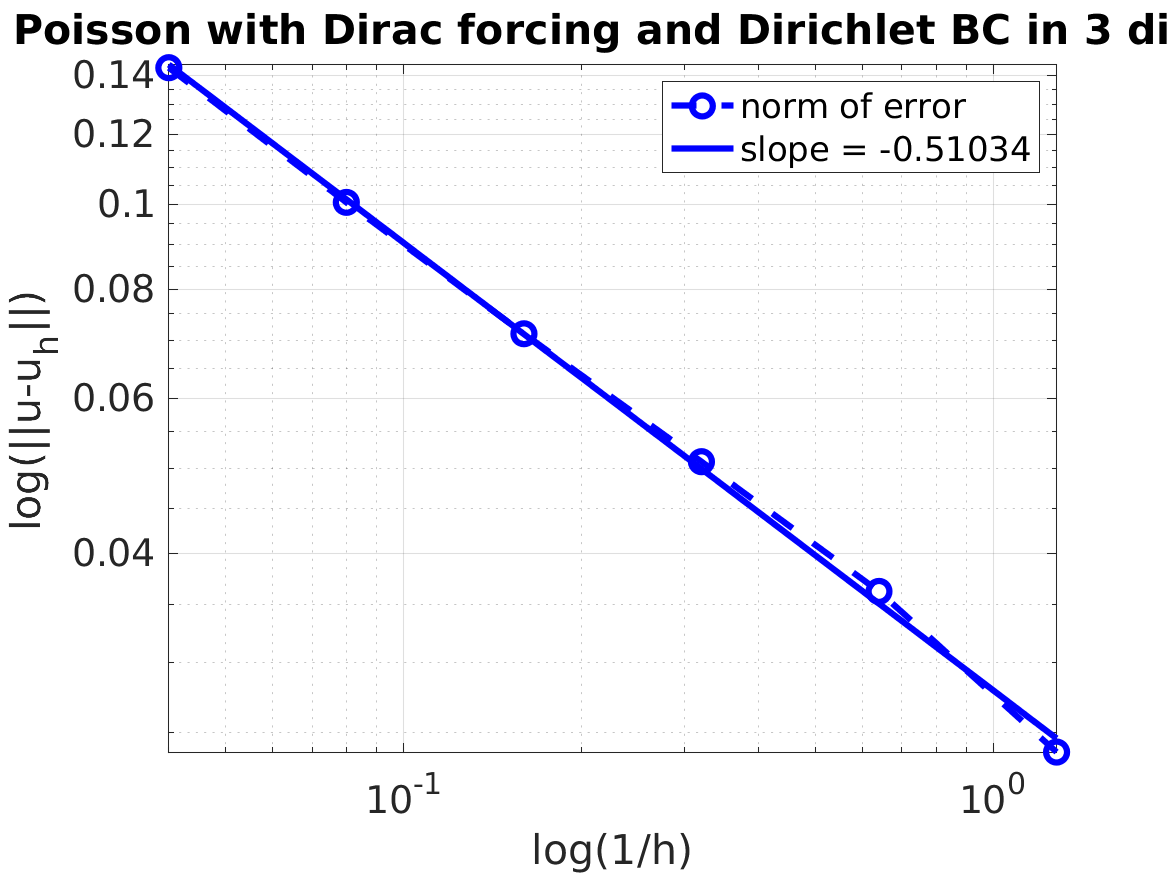} \\
         2-D &
         3-D                                                                              \\
    \end{tabular}
    \caption{Stationary diffusion-reaction equation with homogeneous Dirichlet boundary conditions and Dirac source term: observed $\log{|u-u_h|}$-error against $\log{1/h}$ where the slope estimates convergence order in 2-D and 3-D.}
    \label{fig:PoissonDiracDirichlet}
\end{figure}


\clearpage

\subsection{Time-dependent diffusion-reaction equation with homogeneous Dirichlet boundary conditions}
\label{subsec:transient}

The time-dependent measure-valued test problem is given by
the diffusion-reaction equation \eqref{eq:DREQ} for the concentration $u(\xx,t)$
\begin{align}
\label{equ:PDE}
    u_t - \nabla \cdot (D \, \nabla u) &= \kappa \, \delta_{\0}\!(\xx)
        &&\text{ for $\xx \in \Omega$ and $0 < t \leq t_{\text{final}}$,}              \\
    u(\xx,t) &= 0
        &&\text{ for $\xx \in \partial\Omega$ and $ 0 < t \leq t_{\text{final}} $,}               \\
    u(\xx,0) &= 0
        &&\text{ for $\xx \in \overline{\Omega}$ at $t = 0$,}
\end{align}
on the domain $\Omega = (-50,50)^d \subset \R^d$ in $d = 2, 3$ spatial dimensions.
The time-dependent test problem is inspired by the model of pollution
entering at an injection rate $\kappa$ as a concentrated point source
for 24~hours at the center $\0$ of the domain $\Omega$ and diffusing with diffusivity $D$.
The variables have the meaning and values collected in Table~\ref{tab:variables}.
Analogous to the analysis, the boundary condition is a homogeneous Dirichlet condition.
\begin{table}[!h] \centering
  \caption{Variables and their units for a pollution example.}
  \label{tab:variables}
  \vspace{0.5\baselineskip}
  \begin{tabular}{lll}
    \hline
          Variable & Definition & Value and/or Units \\
    \hline
          $d$        & spatial dimension               & $2$, $3$          \\
          $\xx \in \mathbb{R}^d$ & spatial position    &  km \\
          $t$        & time variable                   &  h                   \\
          $u(\xx,t)$ & concentration of pollutant      &  kg/km$^d$             \\
          $\Omega$   & spatial domain                  & $(-50,50)^d$           \\
          $L$        & domain length                   & 100 km \\
          $t_{\text{final}}$ & time domain             & 24 h \\
          $D$        & diffusivity coefficient         &  \rev{40 km$^2$/h}              \\
          $\kappa$   & injection rate \junk{in $s$}    &  10 kg km$^{-d}$ h$^{-1}$ \\
    \hline
  \end{tabular}
\end{table}

Now, we present convergence results of the finite volume method for the time-dependent diffusion-reaction equation with homogeneous Dirichlet boundary conditions and Dirac source term still conforming to the semi-discretization developed in Sections~\ref{sec:num_method}.
The implicit Euler with a small time step $\Delta t = 1/128$ and
the conjugate gradient method with a tight tolerance $10^{-12}$
are used to ensure that their errors do not obscure the spatial convergence behavior
in (\ref{equ:ResultHeatPersistent}) in Section~\ref{sec:HeatErrAnalPersistent}.

Table~\ref{tab:HeatDiracDirichlet} shows, for 2-D and 3-D,
the errors against the reference solution for several uniform refinement levels $r = 0, 1, 2, \ldots$
at three representative points in time $t = 4, 8, 12$ during the transient simulation.
In parentheses are listed the convergence order $p_{est}$ estimated as logarithm of the
ratio between the error norms of consecutive refinement levels,
estimated in the same way as described in the previous Subsection~\ref{subsec:stationary}.
As in the stationary results,
the results \eqref{equ:ResultHeatPersistentSemi} and
\eqref{equ:ResultHeatPersistent} in Section~\ref{sec:HeatErrAnalPersistent}
lead us to expect convergence orders of
$O(h |\log h|^{1/2}) \to O(h)$ as $h \to 0$ in 2-D and
$O(h^{1/2}) = O(h^{0.5})$ in 3-D.

Figure~\ref{fig:HeatDiracDirichlet} shows the log-log-plot of the convergence order estimate
and the linear fit of $\log{\|u-u_h\|}$ against $\log{1/h}$,
whose slope estimates the convergence order.

Both Table~\ref{tab:HeatDiracDirichlet} and Figure~\ref{fig:HeatDiracDirichlet}
show the numerical convergence order tending to $p = 1$ for 2-D,
while close to $p = 0.5$ in 3-D across all three times $t = 4, 8, 12$.
The values agree with the expected values in the results of Section~\ref{sec:HeatErrAnalPersistent}
for 2-D and 3-D.
\rev{As in the stationary case, the increase of $p_{est}$ at the finest level $r=5$ in 3-D
is due to the self-comparison bias discussed in Subsection~\ref{subsec:stationary}.}

\junk{%
In the time-dependent diffusion-reaction equation of \ref{equ:PDE}, we enable a smooth test problem by setting $s \equiv 0$,  
and, stated here for concreteness in 2-D,
\begin{alignat}{3}
  &f(x,y,t)&= (2t/\tau^2)e^{-(t/\tau)^2} &\cos^2(\pi x/L)\cos^2(\pi y/L)
              \label{equ:test} \\
  &        &+ \, (1-e^{-(t/\tau)^2}) \,
  ( &(-2\pi^2/L^2)\cos(2\pi x/ L)\cos^2(\pi y /L) + \mbox{} \nonumber \\
  &&&(-2\pi^2/L^2)\cos^2(\pi x/L)\cos(2\pi y/ L) ),         \nonumber
\end{alignat}
on right-hand side of the time-dependent test problem 
so that the problem admits the known analytical solution
\begin{equation} \label{equ:utrue}
  u(x,y,t) = (1-e^{-(t/\tau)^2}) \cos^2(\pi x/L) \cos^2(\pi y/L),
\end{equation}
where $\tau$ controls the time scale of the approach to steady state and
the length of the spatial domain in each dimension is $L$.
}

\junk{%
The measure-valued test problem is set by 
modeling the injection of pollution at the center $0$ of the domain $\Omega$
a point source by the measure-valued Dirac delta distribution as
\begin{equation}
  \varphi(\xx,t) = \kappa \, \delta_{0}\!(\xx)
\end{equation}
with injection rate $\kappa$.
}

\junk{
\clearpage

Table~\ref{tab:HeatDiracDirichlet}
contains results for the time-dependent diffusion-reaction 
equation with Dirichlet boundary conditions and smooth source term
using the finite volume method (FVM) with spatial discretization described.
Errors are computed at $t= 4, 8, 12$ as the solution evolves in time. We observe that the FVM method has a numerical convergence order near $p=2$ for this smooth test problems in 2-D and 3-D for all times the errors are computed. We note that in 1-D when $t=8,12$ we see the order diminished.

\begin{table} \centering
  \caption{Time-dependent diffusion-reaction equation with Dirichlet boundary conditions and smooth source term:
           observed $L^2$-error against reference solution (estimated convergence order) vs.\ spatial mesh refinement
           at three points in time.}
  \label{tab:HeatSmoothDirichlet}
  \vspace{0.5\baselineskip}
\begin{tabular}{r rl rl rl}
\hline
& \multicolumn{2}{c}{$t=4$} & \multicolumn{2}{c}{$t=8$} & \multicolumn{2}{c}{$t=12$} \\
$r$ & $\|u-u_{ref}\|$ & ($p_{est}$) & $\|u-u_{ref}\|$ & ($p_{est}$) & $\|u-u_{ref}\|$ & ($p_{est}$) \\
\hline
& \multicolumn{6}{l}{diffusion with smooth forcing and Dirichlet BC in 1 dim} \\
 0 & 1.491461e-01&         & 3.832955e-01&         & 4.897754e-01&         \\
 1 & 3.781538e-02&(1.9797) & 9.552107e-02&(2.0046) & 1.198929e-01&(2.0304) \\
 2 & 9.564097e-03&(1.9833) & 2.396423e-02&(1.9949) & 3.046892e-02&(1.9763) \\
 3 & 2.428481e-03&(1.9776) & 5.954254e-03&(2.0089) & 7.742288e-03&(1.9765) \\
 4 & 6.232652e-04&(1.9621) & 1.399095e-03&(2.0894) & 1.919904e-03&(2.0117) \\
 5 & 1.554308e-04&(2.0036) & 3.214421e-04&(2.1219) & 4.638603e-04&(2.0493) \\
 6 & 3.441523e-05&(2.1752) & 2.144059e-04&(0.5842) & 1.367695e-04&(1.7619) \\
 7 & 8.944486e-06&(1.9440) & 1.462859e-04&(0.5516) & 6.227814e-05&(1.1349) \\
\hline
& \multicolumn{6}{l}{diffusion with smooth forcing and Dirichlet BC in 2 dim} \\
 0 & 1.515645e+00&         & 4.051172e+00&         & 5.436974e+00&         \\
 1 & 4.045844e-01&(1.9054) & 1.060356e+00&(1.9338) & 1.395441e+00&(1.9621) \\
 2 & 1.031694e-01&(1.9714) & 2.686610e-01&(1.9807) & 3.529600e-01&(1.9831) \\
 3 & 2.605593e-02&(1.9853) & 6.770164e-02&(1.9885) & 8.888982e-02&(1.9894) \\
 4 & 6.591243e-03&(1.9830) & 1.705034e-02&(1.9894) & 2.225350e-02&(1.9980) \\
 5 & 1.659132e-03&(1.9901) & 3.949232e-03&(2.1102) & 5.702087e-03&(1.9645) \\
 6 & 4.069750e-04&(2.0274) & 1.053727e-03&(1.9061) & 1.528594e-03&(1.8993) \\
\hline
& \multicolumn{6}{l}{diffusion with smooth forcing and Dirichlet BC in 3 dim} \\
 0 & 1.250052e+01&         & 3.396712e+01&         & 4.634873e+01&         \\
 1 & 3.483291e+00&(1.8435) & 9.269360e+00&(1.8736) & 1.239357e+01&(1.9029) \\
 2 & 8.970943e-01&(1.9571) & 2.377737e+00&(1.9629) & 3.164954e+00&(1.9693) \\
 3 & 2.269976e-01&(1.9826) & 6.033503e-01&(1.9785) & 8.011411e-01&(1.9821) \\
 4 & 5.743176e-02&(1.9828) & 1.515559e-01&(1.9931) & 2.050265e-01&(1.9662) \\
 5 & 1.432804e-02&(2.0030) & 3.875003e-02&(1.9676) & 5.372061e-02&(1.9323) \\
\hline
\end{tabular}
\end{table}
\begin{figure}
    \centering
        \begin{tabular}{ccc}
         \includegraphics[width=0.3\linewidth]{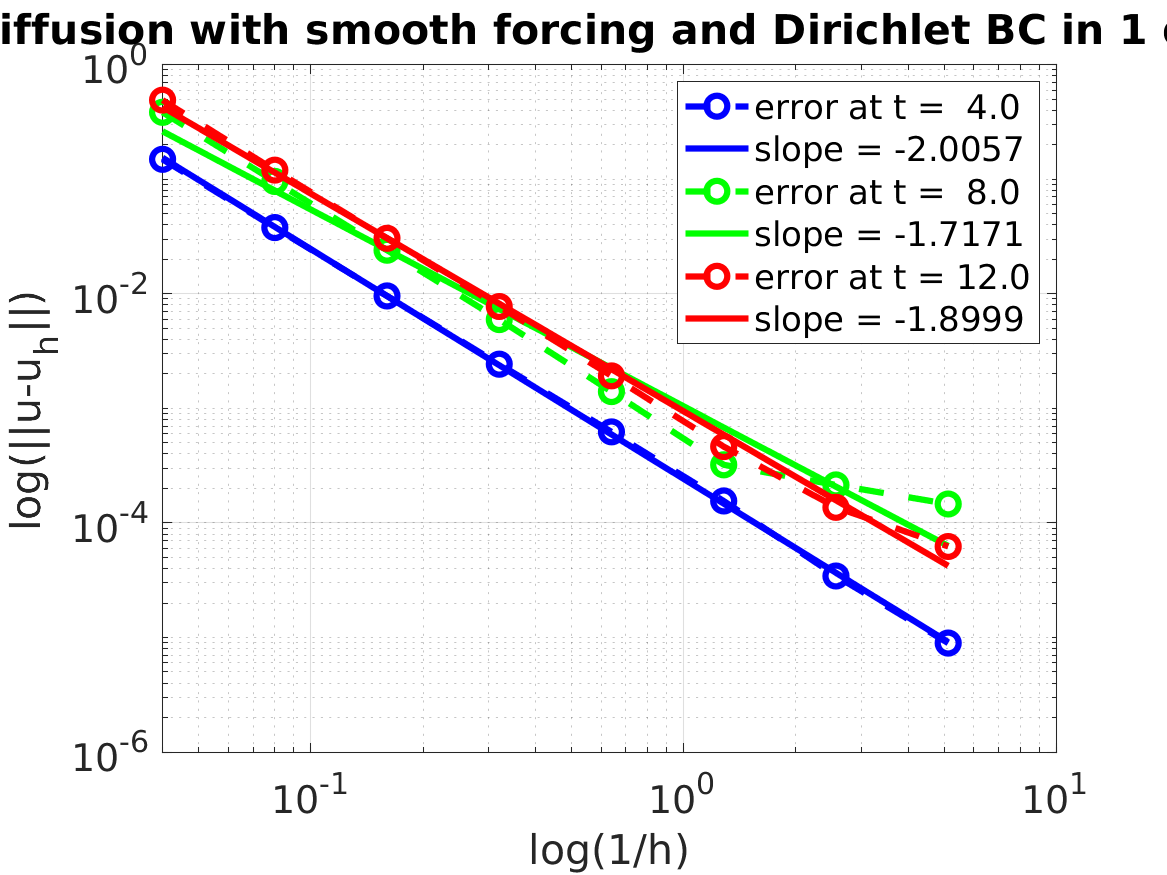} &
         \includegraphics[width=0.3\linewidth]{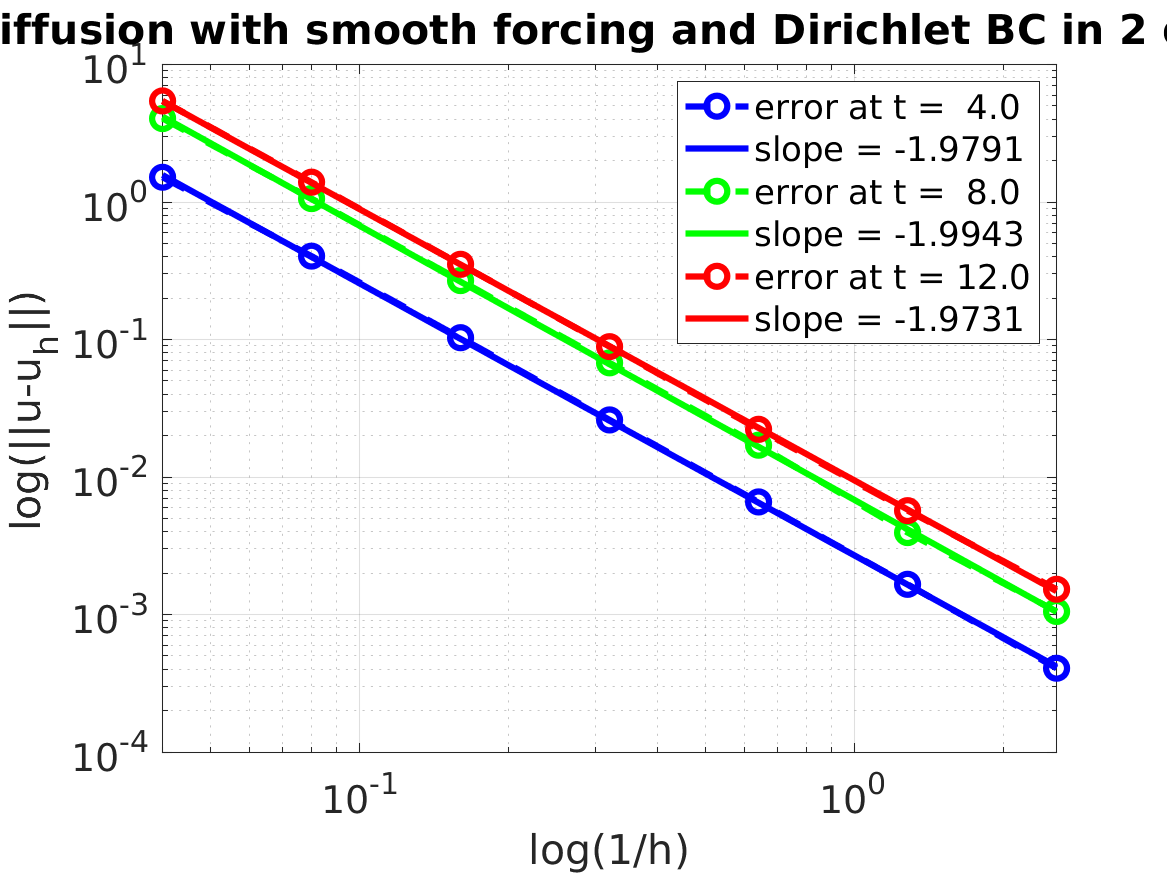} &
         \includegraphics[width=0.3\linewidth]{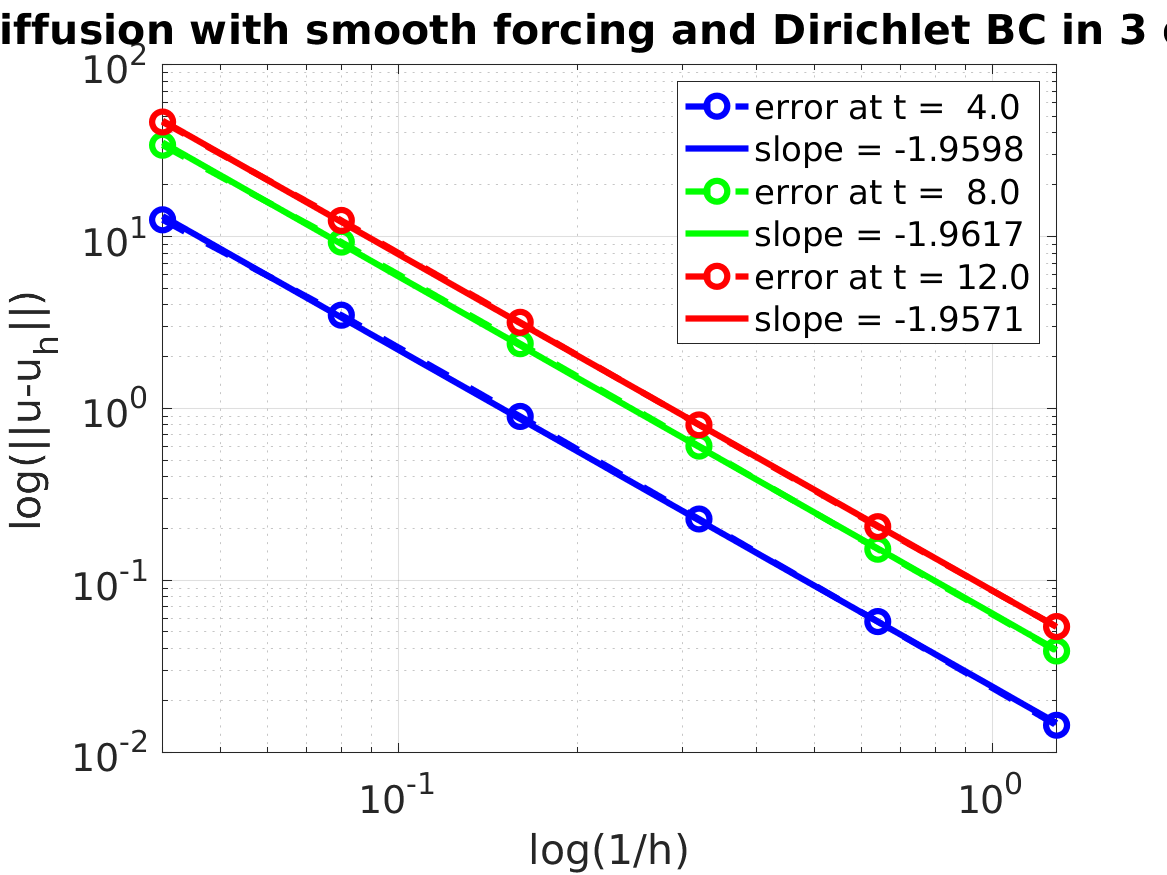} \\
         1-D &
         2-D &
         3-D
        \end{tabular}  
    \caption{Time-dependent diffusion-reaction equation with Dirichlet boundary conditions and smooth source term: observed $\log{|u-u_h|}$-error against $\log{1/h}$ where the slope estimates convergence order in 1-D, 2-D, 3-D.}
    \label{fig:placeholder}
\end{figure}
}

\begin{table} \centering
  \caption{Time-dependent diffusion-reaction equation with Dirichlet boundary conditions and one point source:
           observed $L^2$-error against reference solution (observed convergence order) vs.\ spatial mesh refinement
           at three points in time.}
  \label{tab:HeatDiracDirichlet}
  \vspace{0.5\baselineskip}
\begin{tabular}{r rl rl rl}
\hline
& \multicolumn{2}{c}{$t=4$} & \multicolumn{2}{c}{$t=8$} & \multicolumn{2}{c}{$t=12$} \\
$r$ & $\|u-u_{ref}\|$ & ($p_{est}$) & $\|u-u_{ref}\|$ & ($p_{est}$) & $\|u-u_{ref}\|$ & ($p_{est}$) \\
\hline
& \multicolumn{6}{l}{diffusion with Dirac forcing and Dirichlet BC in 2 dim} \\
 0 & 4.429665e-01&         & 4.394271e-01&         & 4.467623e-01&         \\
 1 & 2.150050e-01&(1.0428) & 2.160408e-01&(1.0243) & 2.235983e-01&(0.9986) \\
 2 & 1.072135e-01&(1.0039) & 1.082159e-01&(0.9974) & 1.131661e-01&(0.9825) \\
 3 & 5.422757e-02&(0.9834) & 5.479832e-02&(0.9817) & 5.756817e-02&(0.9751) \\
 4 & 2.801808e-02&(0.9527) & 2.828863e-02&(0.9539) & 2.959463e-02&(0.9599) \\
 5 & 1.491316e-02&(0.9098) & 1.501291e-02&(0.9140) & 1.549514e-02&(0.9335) \\
 6 & 7.685259e-03&(0.9564) & 7.707547e-03&(0.9619) & 7.816026e-03&(0.9873) \\
\hline
& \multicolumn{6}{l}{diffusion with Dirac forcing and Dirichlet BC in 3 dim} \\
 0 & 1.384470e-01&         & 1.385138e-01&         & 1.386870e-01&         \\
 1 & 9.838274e-02&(0.4929) & 9.840875e-02&(0.4932) & 9.854894e-02&(0.4929) \\
 2 & 7.037154e-02&(0.4834) & 7.038765e-02&(0.4835) & 7.044464e-02&(0.4844) \\
 3 & 5.060385e-02&(0.4757) & 5.060914e-02&(0.4759) & 5.062910e-02&(0.4765) \\
 4 & 3.611849e-02&(0.4865) & 3.611993e-02&(0.4866) & 3.612553e-02&(0.4869) \\
 5 & 2.372452e-02&(0.6064) & 2.372477e-02&(0.6064) & 2.372576e-02&(0.6066) \\
\hline
\end{tabular}
\end{table}
\begin{figure}\label{fig:DiffDiracloglog}
    \centering
        \begin{tabular}{ccc}
         \includegraphics[width=0.45\linewidth]{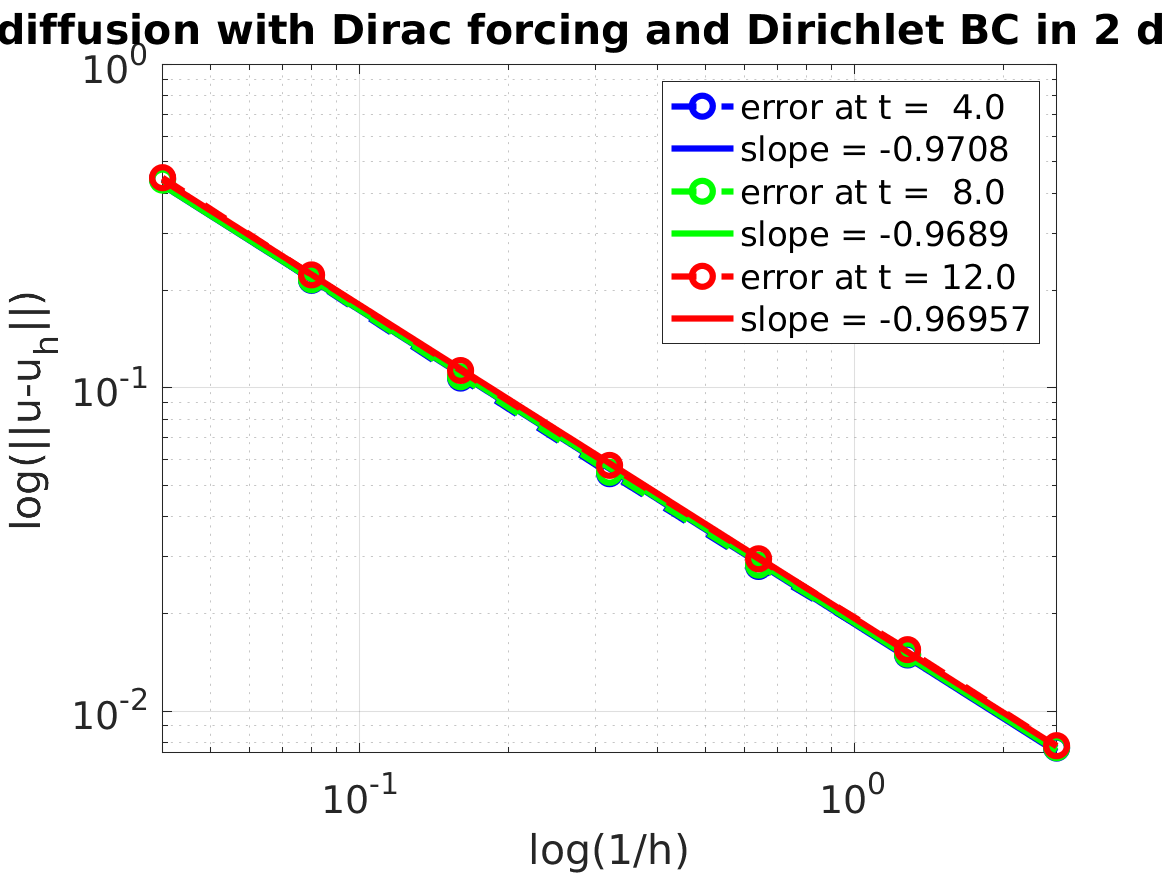} &
         \includegraphics[width=0.45\linewidth]{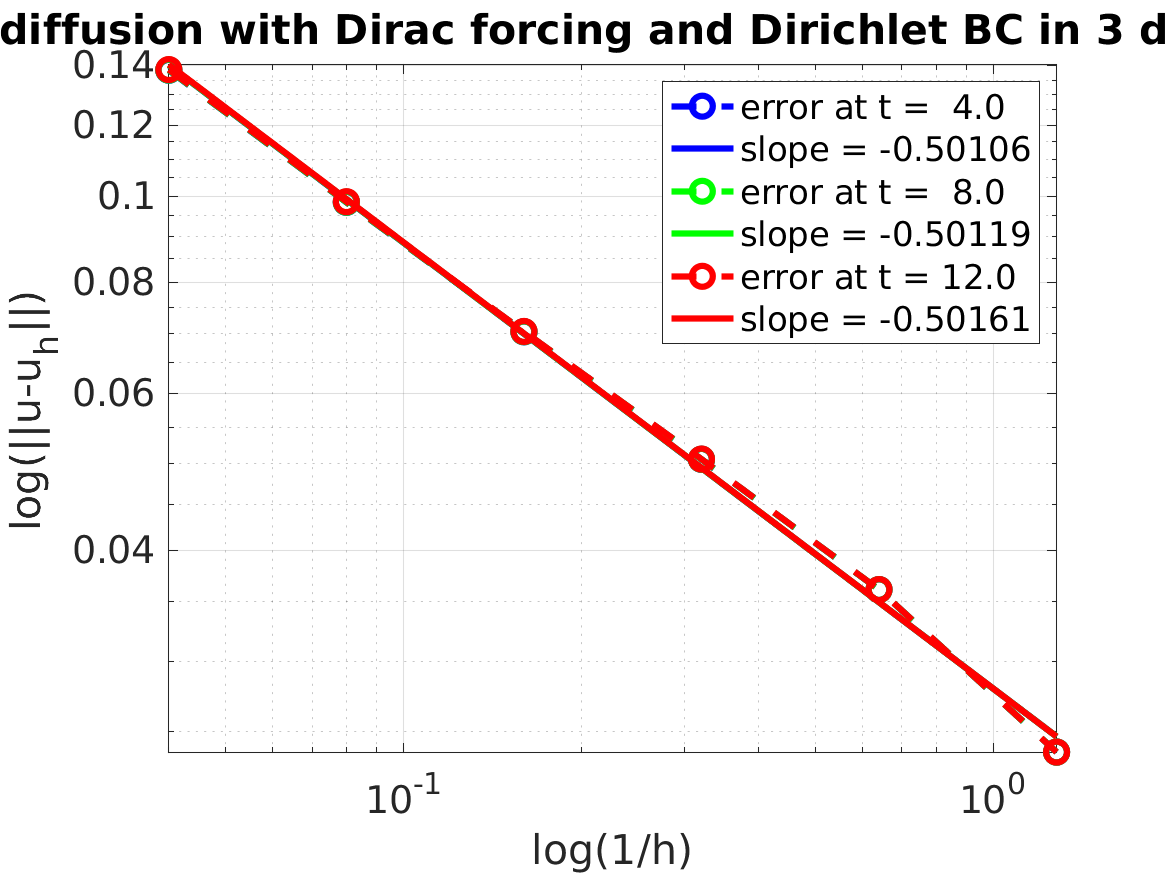} \\
         2-D &
         3-D
        \end{tabular}  
    \caption{Time-dependent diffusion-reaction equation with Dirichlet boundary conditions and one point source: observed $\log{|u-u_h|}$-error against $\log{1/h}$ where the slope estimates convergence order in 2-D and 3-D.}
   \label{fig:HeatDiracDirichlet}
\end{figure}



\section*{Acknowledgments}
We acknowledge the computational resources in the
UMBC High Performance Computing Facility (\url{hpcf.umbc.edu})
and the financial contributions from NIH, NSF, CIRC, and UMBC for this work.
MK is indebted to Petr Knobloch for fruitful discussions. Moreover, he acknowledges  support of the  CSF project 24--10366S. 

\bibliography{MeasureFVM_EECT2026,new_references}

\end{document}